\documentclass[paper=a4, fontsize=10pt]{amsart} 

\usepackage{mathalfa,amsmath,amsthm,amsfonts,amssymb}
\usepackage{pgfpages}
\usepackage{marginnote}
\usepackage{youngtab}
\usepackage{framed,lambda}
\usepackage[english]{babel}
\usepackage[linktoc=all]{hyperref}
\usepackage{cleveref,enumitem,imakeidx}
\usepackage{color,xcolor,graphicx,tikz,tikz-cd}
\usepackage{stmaryrd}
\usepackage[all]{xy}
\setlength\parindent{0pt}

\theoremstyle{plain}
\newtheorem{lemma}[subsection]{Lemma}
\newtheorem{theorem}[subsection]{Theorem}
\newtheorem{prop}[subsection]{Proposition} 
\newtheorem{cor}[subsection]{Corollary}

\theoremstyle{definition}

\theoremstyle{remark}
\newtheorem{example}[subsection]{Example}

\numberwithin{equation}{section} 
\numberwithin{figure}{section} 
\numberwithin{table}{section} 
\numberwithin{subsection}{section} 

\makeatletter
\let\c@equation\c@subsection
\makeatother

\makeatletter
\let\c@figure\c@subsection
\makeatother

\setlength\parindent{0pt} 

\def\tensor{\mathop{\otimes}}
\def\mid{\,\middle\vert\,}
\newif\ifdraft
\draftfalse

\ifdraft
\pgfpagesuselayout{resize to}[a3paper]
\pagecolor[rgb]{0,0,0}
\color[rgb]{0.0,0.9,0.4}
\fi
\draftfalse

\def\borel{\ensuremath{B}}
\def\torus{\ensuremath{T}}
\def\Torus{\ensuremath{\mathcal T}}
\def\Torushat{\ensuremath{\mathcal T}}
\def\N{\ensuremath{N\left(\mathbb C[t,t^{-1}]\right)}}
\def\Borel{\ensuremath{\mathcal B}}
\def\Borelhat{\ensuremath{\mathcal B}}
\def\cartan{\ensuremath{\mathfrak h}}

\def\simple{\ensuremath{\Simple_0}}
\def\Simple{\ensuremath S}
\def\roots{\ensuremath{\overset{\mathrm o}{\Delta}}}
\def\proots{\ensuremath{\roots{}^+}}

\def\ord{\ensuremath{\operatorname{ord}}}

\def\gl{\ensuremath{G}}
\def\lgl{\ensuremath{\overset{\mathrm o}{\mathfrak{g}}}}
\def\lgL{\ensuremath{\mathfrak{g}}}
\def\gO{\ensuremath{G_0}}
\def\gL{\ensuremath{\mathfrak L(\gl)}}
\def\gLhat{\ensuremath{\mathcal G}}
\def\gOhat{\ensuremath{\gLhat_0}}
\def\para{\ensuremath{P}}
\def\Para{\ensuremath{\mathcal P}}

\def\u{\ensuremath{\mathfrak u}}
\def\e{\ensuremath{E}}
\def\Y{\ensuremath{\mathrm Y}}
\def\Y{\ensuremath{Y}}
\def\W{\ensuremath {W}}
\def\E{\ensuremath{\mathrm E}}
\def\E{\ensuremath{E}}
\def\Z{\ensuremath{\mathrm Z}}                                                  
\def\Z{\ensuremath{Z}}                                                  
\def\What{\ensuremath{\widehat\W}}
\def\coroots{\ensuremath{Q}}

\def\betac{\ensuremath{q}}
\def\w{\ensuremath{\varpi}}
\def\til{\ensuremath{\sim}}

\def\image{\ensuremath{\operatorname{Im}}}
\def\fij#1.#2.{\ensuremath{\chi(#2,#1)}}
\def\f#1.{\ensuremath{
F^i_{#1}}}  
\def\row{\ensuremath{\mathcal Row}}
\def\red{\ensuremath{\mathcal Red}}
\def\blue{\ensuremath{\mathcal Blue}}
\def\set{\ensuremath{\mathcal{S}}}
\def\Lambda{\ensuremath{d}}

\def\Ni{\ensuremath{\mathcal N}}

\def\mod{\ensuremath{\mathrm{mod}\,}}
\def\dynkAnHat{$$\xymatrix{ \overset{\alpha_1}\bullet \ar@{-}[r] &\overset{\alpha_2}\bullet \ar@{-}[r] & \cdots\ar@{-}[r] & \overset{\alpha_{n-2}}\bullet \ar@{-}[r] &\overset{\alpha_{n-1}}\bullet\ar@{-}[dll]\\ &&\overset{\alpha_0}\bullet \ar@{-}[ull] & & }$$}
\def\dynkAn{$$\xymatrix{\overset{\alpha_1} \bullet \ar@{-}[r] & \overset{\alpha_2}\bullet \ar@{-}[r] & \cdots \ar@{-}[r]&\overset{\alpha_{n-2}}\bullet \ar@{-}[r] &\overset{\alpha_{n-1}} \bullet}$$}
\title{Cotangent Bundle to the Flag variety -II}

\setcounter{secnumdepth}{2}
\author{V. Lakshmibai and Rahul Singh} 

\date{\normalsize\today} 

\ifdraft\usepackage{blowup}\blowUp{paper=a3}\fi

\begin{document}

\maketitle 

\begin{abstract}
Let $P$ be a parabolic subgroup in $SL_n(\mathbb C)$.
We show that there is a ${SL_n(\mathbb C)}$-stable closed subvariety of an affine Schubert variety in an infinite dimensional partial Flag variety(associated to the Kac-Moody group ${\widehat{SL_n}(\mathbb C)}$) which is a natural compactification of the cotangent bundle to $SL_n(\mathbb C)/P$.
As a consequence, we recover the Springer resolution for any orbit closure inside the variety of nilpotent matrices.
\end{abstract}

\section{Introduction}

Earlier work of Lusztig and Strickland suggests possible connections between conormal varieties to Schubert varieties in the Grassmannian on the one hand, and affine Schubert varieties on the other.
In particular, Lusztig \cite{gl} relates certain orbit closures arising from $\widehat A_h$ to affine Schubert varieties. 
In the case $h=2$, Strickland \cite{strickland} relates such orbit closures to conormal varieties of determinantal varieties; furthermore, any determinantal variety can be canonically realized as an open subset of a Schubert variety in the Grassmannian (cf. \cite{gp2}).
\\
\\
Let \gl\ be any simple algebraic group and \gLhat\ the corresponding untwisted affine Kac-Moody group.
Let \Borel\ be a Borel subgroup of \gLhat\ and $\borel=\Borel\cap\gl$, the corresponding Borel subgroup of \gl.
The set of simple roots of \gLhat\ is $\Simple=\simple\sqcup\{\alpha_0\}$, where \simple\ is the set of simple roots of \gl. 
A simple root $\alpha$ of \gl\ is called \emph{minuscule} if the coefficient of $\alpha$ in any positive root of \gl\ (written in the simple root basis) is less than or equal to $1$.
Note that $\alpha$ is minuscule if and only if there exists an isomorphism of the underlying affine Dynkin diagram taking $\alpha$ to $\alpha_0$.
Suppose now that \para\ is the parabolic subgroup of \gl\ corresponding to some subset $\Simple_\para\subset\simple$ and \Para\ the parabolic subgroup of \gLhat\ corresponding to $\Simple_\para\subset\Simple$.
We say that \para\ is cominuscule if $\simple=\Simple_\para\sqcup\{\alpha\}$ for some minuscule root $\alpha$.
As a first step towards realizing the aforementioned relationship between affine Schubert varieties and conormal varieties to the Schubert varieties in the Grassmannian, Lakshmibai et al. (cf. \cite{vl,slof}) have constructed, for cominuscule \para, a dense embedding $\eta_\para:T^*\gl/\para\hookrightarrow X_\Para(\omega)$, where $X_\Para(\omega)$ is a particular Schubert variety in $\mathcal G/\Para$.
\\
\\
We now assume that $\gl=SL_n(\mathbb C)$.
Lakshmibai et al. \cite{crv} have constructed a similar embedding $\eta_\borel:T^*\gl/\borel\hookrightarrow X_\Borel(\kappa_0)$, where $X_\Borel(\kappa_0)$ is a particular Schubert variety in $\mathcal G/\Borel$. 
Unlike in the case of $\eta_\para$, for \para\ cominuscule, the map $\eta_\borel$ is not a dense embedding.
In this paper, we extend the result of \cite{crv} to all parabolic subgroups \para\ of \gl. 
To accomplish this, we first start with the identification of the cotangent bundle $T^*\gl/\para$ as the fiber bundle over $\gl/\para$ associated to the principal \para-bundle $\gl\rightarrow\gl/\para$, for the adjoint action of \para\ on \u($=\mathfrak{Lie}(U)$), the Lie algebra of the unipotent radical $U$ of \para.
Thus, we have the identification  $T^*\gl/\para=\gl\times^\para\u$.
We define in \Cref{defnPhi} the map $\phi_\para:T^*G/P\hookrightarrow\mathcal G/\Para$ by $\phi_\para(g,X)=g\left(1-t^{-1}X\right)(\mathrm{mod}\,\Para)$.
We then identify the minimal $\kappa\in\What^\Para$ (cf. \Cref{formula:kappa} and \Cref{injective}) for which $\image{\phi_\para}\subset X_\Para(\kappa)$.
The closure of $\image(\phi_\para)$ in $X_\Para(\kappa)$ is a \gl-stable compactification of $T^*\gl/\para$.
When \para\ is cominuscule, $\phi_\para$ is the same as the map in \cite{slof} (upto a twist by the automorphism $X\mapsto -X$ of \u).
In this case, $\phi_\para$ is a dense embedding, i.e., $X_\Para(\kappa)$ is a compactification of $T^*\gl/\para$.
\\
\\
Next we describe the connection of the above result with the Springer resolution.
Recall that the coroot lattice \coroots\ of \gl\ can be identified with the set of integer $n$-tuples $(z_1,\ldots,z_n)$ for which $\sum z_i=0$ (cf. \cite{sk}).
A coroot $\mathbf z=(z_1,\ldots,z_n)\in\coroots$ is anti-dominant if and only if $z_1\leq z_2\leq\ldots\leq z_n$.
Let \Ni\ be the variety of nilpotent $n\times n$ matrices, and \gOhat\ the parabolic subgroup corresponding to $\simple\subset\Simple$.
The variety \Ni\ has a natural algebraic stratification into \gl-orbits, indexed by partitions of $n$ (the sizes of the Jordan blocks).
Lusztig \cite{gl} has given a \gl-equivariant isomorphism $\psi$ between $\mathcal N$ and an open subset of some \gOhat-stable Schubert variety $X_{\gOhat}(\epsilon)$.
In particular, the \gl-orbits of \Ni\ must be contained in certain \gOhat-orbits of $X_{\gOhat}(\epsilon)$; 
observe that the \gOhat-orbits in $\gLhat/\gOhat$ are indexed by $\coroots^-$, the anti-dominant coroots of \gl.
Achar, Henderson, and Riche have constructed (cf. \cite{ah,ahr}) a generalization of this correspondence for all split reductive groups.
\\
\\
Now, let \para\ be the parabolic subgroup of \gl\ corresponding to omitting the simple roots $\alpha_{d_1},\ldots,\alpha_{d_{r-1}}$.
We associate to $P$ the sequence ${\boldsymbol\lambda}=(\lambda_1,\ldots,\lambda_r)$, where $\lambda_1=d_1$, $\lambda_r=n-d_{r-1}$, and $\lambda_i=d_i-d_{i-1}$ for $2\leq i\leq r-1$.
Let $\boldsymbol\nu$ be the conjugate partition of the unique partition of $n$ obtained by permuting the $\lambda_i$ so that they are in non-increasing order.
Let $\theta$ be the generalized Springer resolution $T^*G/P\rightarrow\Ni_{\boldsymbol\nu}$ (see, for example \cite{fu,he}).
In \Cref{springer2}, we show that $\psi_{\boldsymbol\nu}\circ\theta=\operatorname{pr}\circ\,\phi_\para$, where $\psi_{\boldsymbol\nu}$ is the restriction of Lusztig's map $\psi$ to $\Ni_{\boldsymbol\nu}$ and $\operatorname{pr}$ the canonical projection from $\mathcal G/\Para$ to $\mathcal G/\gOhat$.
\Cref{springer2} gives the exact relationship between the \gl-orbits of \Ni\ and the corresponding \gOhat-orbits of $X_{\gOhat}(\epsilon)$ under Lusztig's map. 
The map $\psi$ embeds $\Ni_{\boldsymbol\nu}$ as an open subvariety of the Schubert variety $X_{\gOhat}(\tau_q)$, where $q=(1-\nu_1,\ldots,1-\nu_n)\in\coroots$ and $\tau_q$ is the associated element in \What.
\\
\\
The sections are organized as follows.
In \S2, we define the embedding $\phi_P:T^*G/P\hookrightarrow\mathcal G/\Para$ and relate it to Lusztig's embedding $\psi:\mathcal N\rightarrow\mathcal G/\gOhat$ via the Springer resolution.
In \S3, we discuss some generalities on the finite and affine Weyl groups.
In \S4, we associate a tableau to a given parabolic subgroup and prove some combinatorial results on this tableau.
In \S5, we define $\kappa\in\What$ and show that $\phi_P$ identifies a \gOhat-stable subvariety of $X_\Para(\kappa)$ as a compactification of $T^*G/P$.
In \S6, we define the Schubert variety $X_{\gOhat}(\tau_q)$, which gives a compactification of $\Ni_{\boldsymbol\nu}$ and relate it to the compactification in \S5 via the Springer resolution.
In \S7, we recover the result of \cite{vl} and further show that $X_\Para(\kappa)$ is a compactification of $T^*G/P$ if and only if $P$ is a maximal parabolic subgroup.

\ifdraft\marginpar{setup.tex}\fi
\def\gln #1.{SL_n\left(\mathbb #1\right)}
\section{Lusztig's map and the Springer resolution}
In this section, we define the embedding $\phi_P:T^*G/P\hookrightarrow\mathcal G/\Para$ and relate it to Lusztig's embedding $\psi:\mathcal N\rightarrow\mathcal G/\gOhat$ via the Springer resolution.
For the generalities on Kac-Moody groups, one may refer to \cite{sk,vk}.
\subsection{The Kac-Moody Group}
Let \gl\ be the special linear group $\gln C .$, and $\lgl=\mathfrak{sl}_n({\mathbb C})$, the Lie algebra of $\gln C .$. 
The subgroup \borel\ (resp. $\borel^-$) of upper (resp. lower) triangular matrices in \gl\ is a Borel subgroup of \gl, and the subgroup \torus\ of diagonal matrices is a maximal torus in \gl. 
The root system of \gl\ with respect to $(\borel,\torus)$ has Dynkin diagram $A_{n-1}$.
We use the standard labeling of simple roots \simple\ 
\dynkAn
Let $\roots$ be the set of roots and $\proots$ the set of positive roots.
We consider the standard construction of the affine type Kac-Moody algebra \lgL, in which\begin{align*}
\lgL=\lgl\tensor\limits_{\mathbb{C}}\mathbb{C}[t,t^{-1}]\oplus\mathbb Cc\oplus\mathbb Cd
\end{align*}
and write \gLhat\ for the corresponding maximal Kac-Moody group \gLhat. 
Let $\mathfrak L(\gl)$ be the group of \gl-valued meromorphic loops, i.e., $\mathfrak L(\gl)=\gln C [[t]] [t^{-1}].$. 
The group \gLhat\ can be realized as a two dimensional central extension
$$1\longrightarrow H\longrightarrow\gLhat\overset{\mu}{\longrightarrow}\gL\longrightarrow 1$$
Let \gO\ be the group of \gl-valued holomorphic loops near $0$, i.e., $\gO=\gln C[[t]].$ and $\pi:\gO\rightarrow\gl$ the surjective map given by $t\mapsto0$. 
Then $\Borelhat:=\mu^{-1}(\pi^{-1}(\borel))$ is a Borel subgroup of \gLhat, and $\Torushat:=\mu^{-1}(\pi^{-1}(\torus))$ is a torus in $\Borelhat$. 
Let $\pi_-:\gln C[t^{-1}].\rightarrow\gl$ be the surjective map given by $t^{-1}\mapsto0$. 
The \emph{opposite Borel} $\Borel^-$ is defined as $\Borel^-=\pi_-^{-1}(\borel^-)$.
The root system of \gLhat\ with respect to $(\Borel,\Torus)$ has Dynkin diagram $\widehat {A_{n-1}}$, with simple roots $\Simple=\{\alpha_0,\ldots,\alpha_{n-1}\}$.
\dynkAnHat
Parabolic subgroups containing \borel\ (resp. \Borel) in \gl\ (resp. \gLhat) correspond to subsets of \simple\ (resp. \Simple). 
For \para\ the parabolic subgroup in \gl\ corresponding to $S_\para\subset\simple$, the parabolic subgroup \Para\ in \gLhat\ corresponding to $S_\para\subset\Simple$ is given by $\Para=\mu^{-1}(\pi^{-1}(\para))$. 
The subgroup $\gOhat:=\mu^{-1}(\gO)$ is the parabolic subgroup corresponding to the simple roots $\simple\subset\Simple$.
\ifdraft\marginpar{\color{red}setup.tex}\fi

\ifdraft\marginpar{maps.tex}\fi
\subsection{The Cotangent bundle}
\label{defCotan}
The cotangent bundle $T^*\gl/\para$ is a vector bundle over $\gl/\para$, the fiber at any point $x\in\gl$ being the cotangent space to $\gl/\para$ at $x$; the dimension of $T^*\gl/\para$ equals $2\dim\gl/\para$.
Also, $T^*\gl/\para$ is the fiber bundle over $\gl/\para$ associated to the principal \para-bundle $\gl\rightarrow\gl/\para$, for the adjoint action of \para\ on \u($=\mathfrak{Lie}(U)$), the Lie algebra of the unipotent radical $U$ of \para.
Thus \begin{align*}
    T^*\gl/\para=\gl\times^\para\u=\gl\times\u/\til
\end{align*}
where the equivalence relation \til\ is given by $(g,\Y)\til(gp,p^{-1}\Y p),\ g\in\gl,\ \Y\in\u,\ p\in\para$.
\subsection{The map $\phi_\para$:}
\label{defnPhi}
Let $\phi_\para:\gl\times^\para\u\rightarrow\gLhat/\Para$ be defined by \begin{align*}
    \phi_\para(g,Y)&=g(1-t^{-1}Y)(\mod\Para)  &g\in\gl,\ Y\in\u
\end{align*}
For $g\in\gl,\,x\in\para$ and $\Y\in\u$, we have \begin{align*}
    \phi_\para\left(gx,x^{-1}\Y x\right)  &=gx\left(1-t^{-1}x^{-1}\Y x\right)(\mod\Para)   \\
                                    &=g\left(x-t^{-1}\Y x\right)(\mod\Para)          \\
                                    &=g\left(1-t^{-1}\Y\right)(\mod\Para)            \\
                                    &=g\,\phi_\para\left(1,\Y\right)
\end{align*}
It follows that $\phi_\para$ is well-defined and \gl-equivariant.
\begin{lemma}
The map $\phi_\para$ is injective.
\end{lemma}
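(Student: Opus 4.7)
The plan is to unwind the condition $\phi_\para(g_1,Y_1)=\phi_\para(g_2,Y_2)$ into a single equation in the loop group and then exploit the nilpotency of $Y_1, Y_2\in\u$ to extract the required conjugation relation.

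First I would translate the hypothesis: if $\phi_\para(g_1,Y_1)=\phi_\para(g_2,Y_2)$, then there exists $p\in\Para$ with $g_1(1-t^{-1}Y_1)=g_2(1-t^{-1}Y_2)\,p$. Setting $h:=g_2^{-1}g_1\in\gl$ and solving for $p$ gives
\[
p \;=\; (1-t^{-1}Y_2)^{-1}\,h\,(1-t^{-1}Y_1).
\]
Since $Y_2$ is nilpotent, say $Y_2^{N+1}=0$, the inverse $(1-t^{-1}Y_2)^{-1}=\sum_{k=0}^{N} t^{-k}Y_2^k$ is a \emph{finite} sum. Expanding produces
\[
p \;=\; h \;+\; \sum_{k=1}^{N} t^{-k}\bigl(Y_2^{k}h - Y_2^{k-1}hY_1\bigr) \;-\; t^{-(N+1)}Y_2^{N}hY_1.
\]

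Next I would use the definition of $\Para=\mu^{-1}(\pi^{-1}(\para))$: modulo $\ker\mu\subset\Para$, elements of $\Para$ sit in $\pi^{-1}(\para)\subset\gO=\gln C[[t]].$, hence are formal \emph{power series} in $t$. So every coefficient of $t^{-k}$ in the displayed expression for $p$ must vanish. The $k=1$ relation reads $Y_2h=hY_1$, i.e.\ $Y_1=h^{-1}Y_2h$. By an immediate induction $Y_2^{k}h=hY_1^{k}$, which makes the remaining relations $Y_2^{k}h-Y_2^{k-1}hY_1=0$ automatic; the final coefficient $Y_2^{N}hY_1=hY_1^{N+1}$ also vanishes, since $Y_1^{N+1}=h^{-1}Y_2^{N+1}h=0$.

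After these cancellations one is left with $p=h$, a \emph{constant} element of $\gl$. The condition $p\in\Para$ then forces $h\in\gl\cap\Para=\para$. Combined with $g_1=g_2h$ and $Y_1=h^{-1}Y_2h$, this is precisely the equivalence $(g_1,Y_1)\sim(g_2,Y_2)$ from \Cref{defCotan}, so $\phi_\para$ is injective.

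I do not expect a genuine obstacle; the one item to watch is the passage between $\gLhat$ and $\gL$, but because $\ker\mu\subset\Para$ the argument descends cleanly to $\gL/\pi^{-1}(\para)$, where reading off coefficients of powers of $t$ is unambiguous.
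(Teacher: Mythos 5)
Your proof is correct and follows essentially the same strategy as the paper: unwind the hypothesis to an equation $p=(1-t^{-1}Y_2)^{-1}h(1-t^{-1}Y_1)$ in the loop group, observe that $p\in\Para$ forces all negative powers of $t$ to vanish, and extract $Y_2h=hY_1$ together with $p=h\in\para$. The paper does this slightly more compactly by first conjugating $h$ past $(1-t^{-1}Y_1)$ to obtain $xh^{-1}=(1+t^{-1}Y_1+\cdots)(1-t^{-1}Y')$ and then concluding at once that both sides equal the identity, whereas you expand the product and track coefficients; this is a cosmetic difference only.
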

\begin{proof}
Suppose $\phi_\para(g,\Y)=\phi_\para(g_1,\Y_1)$, i.e., \begin{align*}
                &g\left(1-t^{-1}\Y\right)=g_1\left(1-t^{-1}\Y_1\right)(\mod\Para) \\
    \implies    &g\left(1-t^{-1}\Y\right)=g_1\left(1-t^{-1}\Y_1\right)x
\end{align*}
for some $x\in\Para$. 
Denoting $h=g_1^{-1}g$ and $\Y'=h\Y h^{-1}$, we have
 \begin{align*}
    h\left(1-t^{-1}\Y\right)&=\left(1-t^{-1}\Y_1\right)x\\
    \implies    x           &=\left(1-t^{-1}\Y_1\right)^{-1}h\left(1-t^{-1}\Y\right)\\
                            &=\left(1-t^{-1}\Y_1\right)^{-1}\left(1-t^{-1}\Y'\right)h\\
    \implies    xh^{-1}     &=\left(1+t^{-1}\Y_1+t^{-2}\Y_2+\cdots\right)\left(1-t^{-1}\Y'\right)
\end{align*}
Now since $x\in\Para,\,h\in\gl$, the left hand side is integral, i.e., does not involve negative powers of $t$.
Hence both sides must equal identity. 
It follows $x=h\in\para=\Para\bigcap\gl$ and
$                \Y_1=\Y'=h\Y h^{-1}$.
In particular, $\left(g_1,\Y_1\right)=\left(gh^{-1},h\Y h^{-1}\right)\til\left(g,\Y\right)$ as needed.
\end{proof}
\ifdraft\marginpar{maps.tex}\fi

\ifdraft\marginpar{springer.tex}\fi
\subsection{The Nilpotent Cone}
Nilpotent $n\times n$ matrices can be classified by their Jordan types, which are indexed by the set $\mathcal Par$ of partitions of $n$.
The variety \Ni\ of nilpotent elements in $\mathfrak{sl}_n$ has the decomposition $\Ni=\bigsqcup\limits_{\boldsymbol\nu\in\mathcal Par}\Ni^\circ_{\boldsymbol\nu}$, where $\Ni_{\boldsymbol\nu}^\circ$ is the set of nilpotent matrices of Jordan type $\boldsymbol\nu$.
We write $\Ni_{\boldsymbol\nu}$ for the closure (in \Ni) of $\Ni_{\boldsymbol\nu}^\circ$.
The \emph{dominance order} $\preceq$ on $\mathcal Par$ is defined by $\mu\preceq\nu$ if and only if $\sum\limits_{j\leq i}\mu_j\leq\sum\limits_{j\leq i}\nu_j$ for all $i$.
The inclusion order on $\left\{\Ni_{\boldsymbol\nu}\mid\boldsymbol\nu\in\mathcal Par\right\}$ is the same as the dominance order on $\mathcal Par$, i.e., $\Ni_{\boldsymbol\mu}\subset\Ni_{\boldsymbol\nu}$ if and only if $\mu\preceq\nu$.

\subsection{The map $\psi$:}
\label{defn:psi}
Consider the map $\psi:\mathcal{N}\rightarrow \gLhat/\gOhat$ given by\begin{align*}
\hspace{30pt}    \psi\left(N\right)&=\left(1-t^{-1}N\right)(\mod\gOhat)  &\text{for }N\in\Ni
\end{align*}
Observe that $\psi$ is \gl-equivariant:\begin{align*}
    \psi\left(gNg^{-1}\right)  &=\left(1-t^{-1}gNg^{-1}\right)(\mod\gOhat)\\
                    &=g\left(1-t^{-1}N\right)g^{-1}(\mod\gOhat)\\
                    &=g\left(1-t^{-1}N\right)(\mod\gOhat)
\end{align*}

\begin{lemma}
The map $\psi$ is injective.
\end{lemma}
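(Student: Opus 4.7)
The plan is to adapt the injectivity argument used earlier for $\phi_\para$, with a simplification that exploits the nilpotence of elements of $\Ni$. Suppose $\psi(N)=\psi(N_1)$, so that
\[
1-t^{-1}N \;=\; \left(1-t^{-1}N_1\right)x
\]
for some $x\in\gOhat$. Solving formally gives $x=\left(1-t^{-1}N_1\right)^{-1}\left(1-t^{-1}N\right)$, and the key point I would emphasize is that since $N_1$ is nilpotent the inverse
\[
\left(1-t^{-1}N_1\right)^{-1}=\sum_{k\geq 0} t^{-k}N_1^k
\]
is actually a finite sum, so $x$ is a well-defined Laurent polynomial in $t^{-1}$ with matrix coefficients.

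Next I would expand the product and compare powers of $t$. Writing
\[
x \;=\; 1+t^{-1}(N_1-N)+\sum_{k\geq 2} t^{-k}\,N_1^{k-1}(N_1-N),
\]
the requirement that $x\in\gOhat$ means $x$ has no strictly negative powers of $t$. Reading off the coefficient of $t^{-1}$ immediately forces $N_1=N$, and this single equation actually kills every negative power at once, so $x=1$ and we are done.

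The argument is noticeably cleaner than the one for $\phi_\para$ because there is no $\para$-equivalence to worry about: the image lies in $\gLhat/\gOhat$ rather than a quotient by a nontrivial equivalence relation on a fibered product, so the cancellation directly yields equality of the nilpotent matrices rather than mere $\para$-conjugacy. Consequently I expect no substantive obstacle; the only thing to be careful about is justifying that the formal series inverse is legitimate inside $\gLhat$, for which nilpotence is exactly what is needed.
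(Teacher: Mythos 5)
Your proof is correct and takes essentially the same approach as the paper: expand $(1-t^{-1}N_1)^{-1}(1-t^{-1}N)$ as a Laurent polynomial in $t^{-1}$, invoke integrality of the resulting element of $\gOhat$ to kill the negative-degree part, and read off $N=N_1$ from the $t^{-1}$ coefficient. The observation that nilpotence makes the geometric series finite and that the $t^{-1}$ coefficient alone suffices is a small clarifying refinement of the paper's argument, not a different route.
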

\begin{proof}
Suppose $\psi\left(N\right)=\psi\left(N_1\right)$, i.e., $\left(1-t^{-1}N\right)=\left(1-t^{-1}N_1\right)(\mod\gOhat)$. 
Then $$\left(1-t^{-1}N\right)^{-1}\left(1-t^{-1}N_1\right)=\left(1+t^{-1}N+t^{-2}N^2+\cdots\right)\left(1-t^{-1}N_1\right)\in\gOhat$$
In particular, $\left(1-t^{-1}N\right)^{-1}\left(1-t^{-1}N_1\right)$ is integral, and so must equal $1$. 
It follows that $N=N_1$.
\end{proof}

\subsection{The Springer Resolution}
\label{springer}
Let $\mathfrak n$ be Lie algebra consisting of strictly upper triangular matrices.
As in \Cref{defCotan}, with $P=B$, we have an identification of $T^*G/B$ with $G\times^B\mathfrak n$. 
The Springer resolution  (cf. \cite{springer}) $\theta:T^*\gl/\borel\rightarrow\Ni$ is given by $\theta\left(g,N\right)=gNg^{-1}$ where $g\in\gl,\, N\in\mathfrak n$.
The maps $\phi_\borel$ and $\psi$ from \Cref{defnPhi,defn:psi} fit into the following commutative diagram:
\begin{center}
\vskip -20mm 
\includegraphics[scale=0.2]{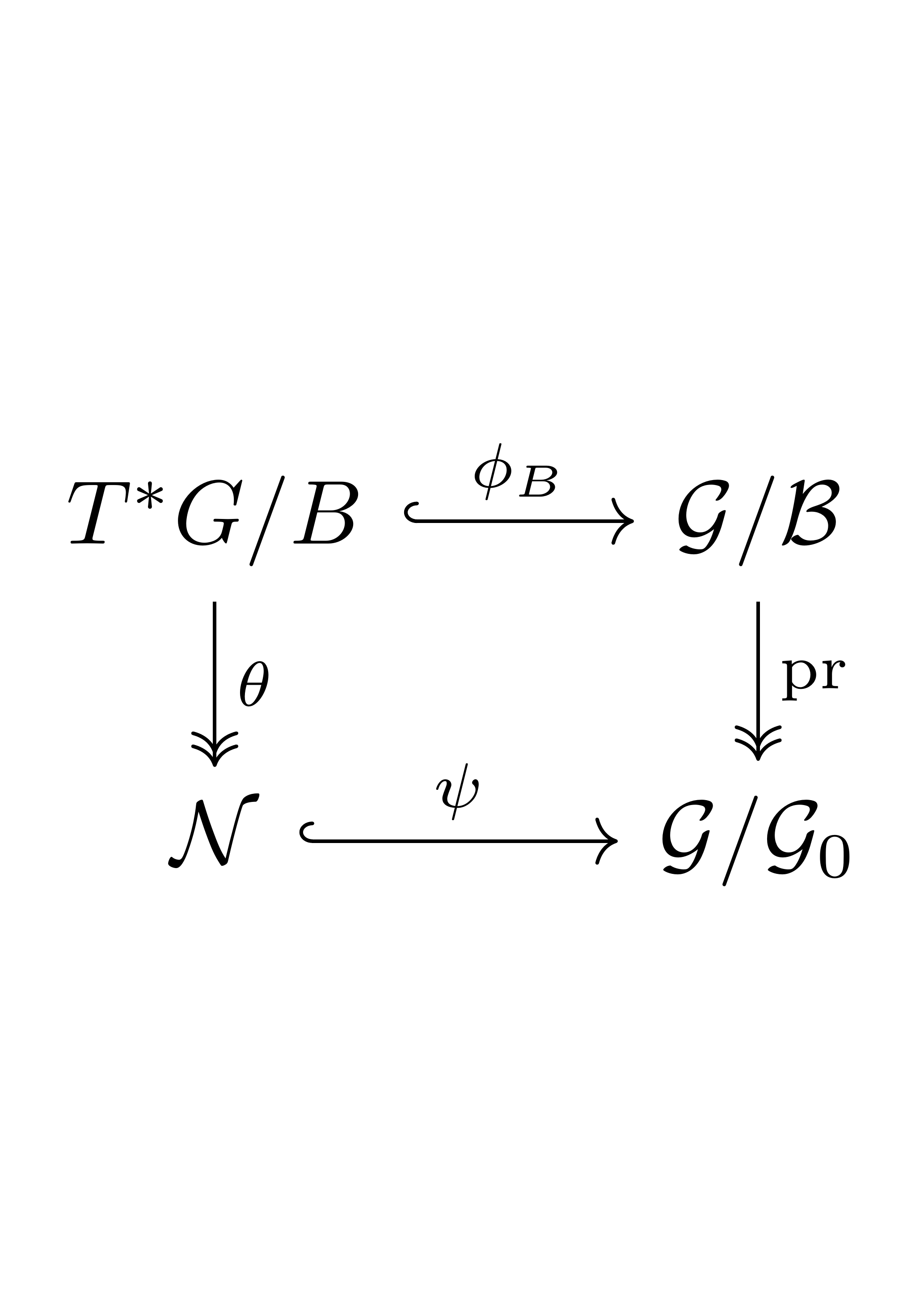}
\vspace{-20mm} 
\end{center}
where $\operatorname{pr}:\gLhat/\Borel\rightarrow\gLhat/\gOhat$ is the natural projection. 
A refinement of this result is proved in \Cref{springer2}.
\ifdraft\marginpar{springer.tex}\fi

\ifdraft\marginpar{weyl.tex}\fi
\section{The Root System and the Weyl Group}
In this section, we discuss some standard results on the root system, Weyl groups, and the Bruhat decomposition of an affine Kac-Moody group. 
For further details, the reader may refer to \cite{vk,sk}.
\subsection{The Weyl group}
\label{permPres}
Let $N$ be the normalizer of $T$ in \gl. 
The Weyl group \What\ of \gLhat\ is $N(\mathbb C[t,t^{-1}])/T$. 
The elements of $N\left(\mathbb C[t,t^{-1}]\right)$ are matrices of the form $\sum\limits_{i=1}^n t_i\E_{\sigma(i),i}$, where \begin{itemize}
    \item   $(t_i)$ is a collection of non-zero monomials in $t$ and $t^{-1}$.
    \item   $\sigma$ is a permutation of $\{1\ldots n\}$. 
    \item   $\operatorname{det}\left(\sum\limits_{i=1}^n t_i\E_{\sigma(i),i}\right)=1$.
\end{itemize}
Consider the homomorphism 
$    N\left(\mathbb C[t,t^{-1}]\right)                  \longrightarrow GL_n\left(\mathbb C[t,t^{-1}]\right) $ given by \begin{align}\label{apm}
    \sum\limits_{i=1}^n t_i\E_{\sigma(i),i} \longmapsto\sum\limits_{i=1}^n t^{\ord(t_i)}\E_{\sigma(i),i}
\end{align}
The kernel of this map is \torus, and so we can identify \What\ with the group of $n\times n$ permutation matrices $M$, with each non-zero entry a power of $t$, and $\ord(\det M)=0$.
For $w\in\What$, we call the matrix corresponding to $w$ the \emph{affine permutation matrix} of $w$.
\subsection{Generators for \What}
\label{ltau}
We shall work with the set of generators for $\What$ given by $\{s_0, s_1,\ldots, s_{n-1}\}$, where $s_i, 0\le i\le n-1$ are the reflections with respect to $\alpha_i, 0\le i\le n-1$. 
Note that $\{\alpha_i, 1\le i\le n-1\}$ being the set of simple roots of $G$, the Weyl group $W$ of $G$ is the subgroup of \What\ generated by $s_1,\ldots,s_{n-1}$.
The affine permutation matrix of $w\in W$ is $\sum\E_{w(i),i}$.
The affine permutation matrix of $s_0$ is given by
$$\begin{pmatrix}
0&0&\cdots & t^{-1}\\
0&1&\cdots &0\\
\vdots & \vdots & \vdots & \vdots\\
0&\cdots &1 &0\\
t &0&0&0
\end{pmatrix}
$$

\subsection{The root system of \gl}
Consider the vector space of $n\times n$ diagonal matrices, with basis $\left\{\E_{ii}\mid 1\leq i\leq n\right\}$.
Writing $\left\langle\epsilon_i\mid 1\leq i\leq n\right\rangle$ for the basis dual to $\left\{\E_{ii}\mid 1\leq i\leq n\right\}$, we can identify the set of roots $\roots$ of \gl\ as follows:
$$\roots=\left\{\langle\epsilon_i-\epsilon_j\rangle\mid 1\leq i\neq j\leq n\right\}$$
The positive root $\alpha_i+\ldots+\alpha_{j-1}$, where $i<j$, corresponds to $\epsilon_i-\epsilon_j$, and the negative root $-(\alpha_i+\ldots+\alpha_{j-1})$ corresponds to $\epsilon_j-\epsilon_i$.
For convenience, we shall denote the root $\epsilon_i-\epsilon_j$ by $(i,j)$, where $1\leq i\neq j\leq n$.
The action of $W(\cong S_n)$ on $\roots$ is given by $w(i,j)=(w(i),w(j))$.
For $1\leq a<b\leq n$, the reflection with respect to the positive root $(a,b)$ is given by the (affine) permutation matrix \begin{align*}
    s_{(a,b)}&=\E_{ab}+\E_{ba}+\sum\limits_{\substack{1\leq i\leq n\\ i\neq ab}} \e_{ii} 
\end{align*}

\subsection{The root system of \gLhat}
Let $\delta$ be the basic imaginary root in $\Delta$, given by $\delta=\alpha_0+\theta$, where $\theta$ is the highest root $\theta=\alpha_1+\cdots+\alpha_{n-1}$.
The root system $\Delta$ of \gLhat\ can be described as 
$$\Delta=\{n\delta+\alpha\vert n\in\mathbb Z,\alpha\in\roots\}\bigsqcup\left\{n\delta\mid n\in\mathbb Z, n\neq0\right\}$$ 
The set of positive roots $\Delta^+$ has the following description:
$$\Delta^+=\{n\delta+\alpha\vert\, n>0,\alpha\in\roots\}\bigsqcup\proots\bigsqcup\{n\delta\vert\,n>0\}$$

\subsection{Decomposition of \What\ as a semi-direct product}
\label{decomposition}
Consider the element $s_\theta\in W$, reflection with respect to the root $\theta$. 
There exists (cf. \cite{sk},\S13.1.6) a group isomorphism $\widehat{W}\rightarrow W\ltimes\coroots$ given by \begin{align*}
    s_i&\mapsto (s_i,0) &\text{ for }1\leq i\leq n-1\\
    s_0&\mapsto (s_\theta,-\theta^{\vee})&
\end{align*} 
where \coroots\ is the coroot lattice of $\mathfrak{sl}_n$ and $\theta^\vee=\alpha_1^\vee+\cdots+\alpha_{n-1}^\vee$.
The simple coroot $\alpha_i^\vee\in\cartan,\, 1\leq i\leq n-1$, is given by the matrix $\E_{i,i}-\E_{i+1,i+1}$.
As shown in \cite{crv}, a lift of $(\operatorname{id},\alpha_i^\vee)\in\What$, $1\leq i\leq n-1$ to $N(\mathbb C[t,t^{-1}])$ is given by
\begin{align*}\begin{pmatrix}
\ddots  &       &   &       \\
        & t^{-1}&   &       \\
        &       & t &       \\
        &       &   &\ddots \\
\end{pmatrix}\end{align*}
where the dots are $1$, and the off-diagonal entries are $0$.
For $q\in\coroots$, we will write $\tau_q$ for the image of $(\operatorname{id},q)\in W\ltimes\coroots$ in $\widehat W$.
In particular, if $q\in\coroots$ is given by $\tau_q=\sum t_i\e_{ii}$, and $\alpha\in\roots$ is given by $\alpha=(a,b)$, then 
\begin{align}\label{actionOfq}
    \alpha(q)=\ord(t_b)-\ord(t_a)
\end{align}
The length of $\tau_q$ is given by (cf. \cite{sk}, \S13.1.E(3)) the formula\begin{align}\label{lengthoftau}
    l(\tau_q)=\sum\limits_{\alpha\in\proots}\lvert\alpha(q)\rvert 
\end{align}

The action of $\tau_q$ on the root system of $\mathcal G$ is given by \begin{align}
\label{actionOfTau}
\hspace{30pt}    \tau_q(\alpha)&=\alpha-\alpha(q)\delta&\text{ for }\alpha\in\roots\\
\hspace{30pt}   \tau_q(\delta)&=\delta&
\end{align} 
In particular, for $\alpha\in\proots$, $\tau_q(\alpha)>0$ if and only if $\alpha(q)\leq 0$.
\begin{cor}
\label{count}
For $\alpha\in\proots$, $\tau_qs_\alpha>\tau_q$ if and only if $\alpha(q)\leq 0$ and $s_\alpha\tau_q>\tau_q$ if and only if $\alpha(q)\geq 0$ .
\end{cor}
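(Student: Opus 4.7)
The plan is to reduce the statement to the standard Coxeter-theoretic criterion for length: for any element $w$ of a Coxeter group and any positive (real) root $\alpha$, one has $ws_\alpha > w$ if and only if $w(\alpha) \in \Delta^+$, and dually $s_\alpha w > w$ if and only if $w^{-1}(\alpha) \in \Delta^+$. This is a standard consequence of the identification of length with the number of inversions (see, e.g., Kumar or Kac). Once this is in hand, both claims become a direct positivity computation inside the affine root system described in \S2.

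Applying the first criterion with $w = \tau_q$ and using formula \eqref{actionOfTau}, we have
\[
    \tau_q(\alpha) \;=\; \alpha - \alpha(q)\,\delta.
\]
Since $\alpha \in \proots$, I would then invoke the explicit description
\[
    \Delta^+ = \{n\delta + \beta \mid n>0,\, \beta\in\roots\} \sqcup \proots \sqcup \{n\delta \mid n>0\}
\]
to decide when $\alpha - \alpha(q)\delta$ lies in $\Delta^+$. If $\alpha(q) < 0$, then $-\alpha(q) > 0$ and we are in the first summand, so the element is positive; if $\alpha(q)=0$, then it reduces to $\alpha\in\proots$, again positive; if $\alpha(q) > 0$, then $-\alpha(q) < 0$ places the element in $-\Delta^+$. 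Thus $\tau_q(\alpha) \in \Delta^+$ exactly when $\alpha(q)\le 0$, which is the first half.

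For the second half, I would observe that under the semidirect product decomposition of \Cref{decomposition} the inverse of a translation satisfies $\tau_q^{-1} = \tau_{-q}$. Hence
\[
    \tau_q^{-1}(\alpha) \;=\; \alpha - \alpha(-q)\,\delta \;=\; \alpha + \alpha(q)\,\delta,
\]
and the same case analysis in $\Delta^+$ (now with the sign reversed) shows that this is positive precisely when $\alpha(q) \ge 0$. Combined with the dual criterion $s_\alpha\tau_q > \tau_q \iff \tau_q^{-1}(\alpha) \in \Delta^+$, this yields the second equivalence.

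The main conceptual point — rather than any real obstacle — is making sure one uses the correct positivity description: the threshold cases $\alpha(q)=0$ produce the element $\alpha\in\proots$ itself, which is positive, and this is exactly what forces the weak inequalities ($\le 0$ and $\ge 0$) rather than strict ones in the statement. Everything else is bookkeeping in the affine root system, granted the standard length/inversion criterion.
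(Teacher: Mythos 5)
Your proof is correct and follows essentially the same route as the paper's: both appeal to the standard Coxeter criterion $ws_\alpha > w \iff w(\alpha) \in \Delta^+$ (and its dual), then substitute $w = \tau_q$ and use $\tau_q(\alpha) = \alpha - \alpha(q)\delta$ together with the explicit description of $\Delta^+$ to decide positivity. You simply spell out the case analysis and the computation $\tau_q^{-1}(\alpha) = \alpha + \alpha(q)\delta$ a bit more explicitly than the paper does.
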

\begin{proof}
Follows from the equivalences (cf. \cite{sk}) $ws_\alpha>w\iff w(\alpha)>0$ and $s_\alpha w>w\iff w^{-1}(\alpha)>0$ applied to $w=\tau_q$.
\end{proof}
\ifdraft\marginpar{weyl.tex}\fi

\ifdraft\marginpar{calculus.tex}\fi
\subsection{The Bruhat Decompostion}
The Bruhat decomposition of $\gLhat$ is given by $\gLhat=\bigsqcup\limits_{w\in\What}\Borelhat w\Borelhat$.
For $w\in\What$, let $X(w)\subset\gLhat/\Borelhat$ be the {\em affine Schubert variety}:
$$    X(w)=\overline{\Borelhat w\Borelhat}(\mod\Borelhat)=\bigsqcup\limits_{v\leq w}\Borelhat v\Borelhat(\mod\Borelhat)$$
The Bruhat order $\leq$ on \What\ reflects inclusion of Schubert varieties, i.e., $v\leq w$ if and only if $X(v)\subseteq X(w)$. 
The Schubert variety $X(w)$ is a projective variety of dimension $l(w)$, the length of $w$. 
\\
\\
A stronger version of the Bruhat decomposition is given by $\gLhat=\bigsqcup\limits_{w\in\What^\Para}\Borelhat w\Para$, where $\What^\Para$ is the set of minimal representatives (in the Bruhat order) of $\What/\What_\Para$ in \What.
For $w\in\What$, the {\em affine Schubert variety} $X_\Para(w)\subset\gLhat/\Para$ is given by
$$    X_\Para(w)=\overline{\Borelhat w\Para}(\mod\Para)=\bigsqcup\limits_{\substack{v\leq w\\ v,w\in\What^\Para}}\Borelhat w\Para(\mod\Para)$$
Every coset $\What_\Para w$ has a unique representative in $\What^\Para$.
For $w\in\What^\Para$, $X_\Para(w)$ is a projective variety of dimension $l(w)$.

\begin{prop}
\label{table}
Suppose $w\in\What$ is given by the affine permutation matrix 
$$\sum t_i\e_{\sigma(i)i}=\sum \e_{\sigma(i)i}\sum t_i\e_{ii}=\sigma\tau_q$$ 
with $\sigma\in S_n$ and $\tau_q=\sum t_i\e_{ii}\in\coroots$.
Let $1\leq a<b\leq n$ and set $s_r=s_{(a,b)}$, $s_l=\sigma s_r\sigma^{-1}=s_{(\sigma(a),\sigma(b))}$. 
\begin{description}
    \item[Case 1] Suppose $\ord(t_a)=\ord(t_b)$. 
        Then $s_lw=ws_r$. 
        The set $\{w,s_lw\}$ has a unique minimal element $u$, given by $u=\begin{cases} w&\text{if }\ \sigma(a)<\sigma(b)\\ s_lw&\text{if }\ \sigma(a)>\sigma(b)\end{cases}$. 
    \item[Case 2] Suppose $\ord(t_a)\neq\ord(t_b)$. 
        Then $s_lw\neq ws_r$, and the set $\{w,s_lw,ws_r,s_lws_r\}$ has a unique minimal element $u$, given by $u=\begin{cases} s_lws_r&\text{if }\ \ord(t_a)<\ord(t_b)\\ w&\text{if }\ \ord(t_a)>\ord(t_b)\end{cases}$.
        Further, we have $u<s_lu<s_lus_r$ and $u<us_r<s_lus_r$. 
\end{description}
\end{prop}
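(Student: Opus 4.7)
The plan is to use two standard tools: the reflection criterion (Corollary \ref{count} and its right-handed variant), which gives $s_\alpha v > v \iff v^{-1}(\alpha) > 0$ and $v s_\alpha > v \iff v(\alpha) > 0$ for any positive root $\alpha$ and any $v \in \What$, together with the standard fact that when $\{v, s_l v, v s_r, s_l v s_r\}$ has four distinct elements, it forms a Bruhat square with a unique minimum $u$ characterized by the pair $s_l u > u$ and $u s_r > u$. The preparatory calculation is the commutation rule $\tau_q s_r = s_r\tau_{s_r(q)}$, arising from $s_r\tau_q s_r^{-1}=\tau_{s_r(q)}$ and $s_r(q) = q - \alpha(q)\alpha^\vee$ with $\alpha = (a,b)$. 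Combined with $s_l = \sigma s_r\sigma^{-1}$, this yields
\[
s_l w = \sigma s_r\tau_q,\qquad w s_r = \sigma s_r\tau_{s_r(q)},\qquad s_l w s_r = \sigma\tau_{s_r(q)},
\]
from which $s_l w = w s_r \iff \alpha(q) = 0 \iff \ord(t_a)=\ord(t_b)$. This immediately distinguishes Case 1 from Case 2 and fixes the cardinality of the set.

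\medskip

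In Case 1, $\alpha(q)=0$ collapses the set to $\{w, s_l w\}$, and I would compare the two via $s_l w > w \iff w^{-1}(\alpha_l) > 0$. Unpacking $w^{-1} = \tau_q^{-1}\sigma^{-1}$ with $\tau_q^{-1}(\beta) = \beta + \beta(q)\delta$ and using $\alpha(q)=0$ to kill the $\delta$-contribution, the sign of $w^{-1}(\alpha_l)$ reduces to whether $\sigma^{-1}(\alpha_l) = \pm(a,b)$ is positive --- precisely the $\sigma(a)\lessgtr\sigma(b)$ dichotomy. For Case 2, $\alpha(q)\ne 0$ ensures all four elements are distinct, and I would verify minimality of the candidate $u$ by checking both $u(\alpha_r) > 0$ and $u^{-1}(\alpha_l) > 0$. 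Using the explicit formulas above to identify $u = \sigma^*\tau_{q^*}$ in each sub-case, both sign checks reduce to the sign of $\alpha(q) = \ord(t_b) - \ord(t_a)$ combined with a small finite-root comparison, yielding the claimed answer.

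\medskip

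The ``Further'' chain $u<s_lu<s_lus_r$ and $u<us_r<s_lus_r$ then follows from two more applications of the reflection criterion: one to $s_lu$ paired with the root $\alpha_r$, and one to $us_r$ paired with $\alpha_l$. Both reduce to the same sign condition on $\alpha(q)$ already used to identify $u$, so no genuinely new input is needed. The main obstacle is the Case 2 bookkeeping: ensuring that the signs of $u(\alpha_r)$ and $u^{-1}(\alpha_l)$ line up consistently with the claimed candidate, which amounts to carefully parsing when a combination $k\delta + \gamma$ with $\gamma\in\roots$ lies in $\Delta^+$ as a function of the signs of $k$, $\alpha(q)$, and the sub-case of $\sigma$.
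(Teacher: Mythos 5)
Your plan --- factor $w=\sigma\tau_q$, commute $s_r$ past $\tau_q$ via $\tau_q s_r=s_r\tau_{s_r(q)}$, and then decide each covering relation by the reflection criterion $s_\alpha v>v\iff v^{-1}(\alpha)>0$, $vs_\alpha>v\iff v(\alpha)>0$, finishing with the Bruhat-square lemma --- is exactly the toolkit the paper uses; the paper just substitutes the explicit formulas $w(a,b)=\sigma\tau_q(a,b)$, $w^{-1}(\sigma(a),\sigma(b))=\tau_q^{-1}\sigma^{-1}(\sigma(a),\sigma(b))$ and computes directly, which is the same thing unwound. So as an approach you are on the right track.

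However, there is a real obstruction to the last step ``yielding the claimed answer,'' and you should not trust it: as stated, Case~2 of the proposition is not correct when $\sigma(a)<\sigma(b)$, and the paper's own proof contains the sign slip that produces it (it writes $w^{-1}=\tau_q\sigma^{-1}$ where it should be $w^{-1}=\tau_q^{-1}\sigma^{-1}$, so that $w^{-1}(\sigma(a),\sigma(b))=(a,b)-k\delta$, not $(a,b)+k\delta$). The cleanest way to see the discrepancy is to specialize your own setup to $\sigma=\mathrm{id}$: then $s_l=s_r=s_{(a,b)}$, and \Cref{count} gives $s_{(a,b)}\tau_q>\tau_q\iff (a,b)(q)\geq 0\iff\ord(t_b)\geq\ord(t_a)$, whereas the stated Case~2 with $\ord(t_a)>\ord(t_b)$ declares $u=w$, hence $s_lw>w$, i.e.\ $s_{(a,b)}\tau_q>\tau_q$ even though $(a,b)(q)<0$. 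These contradict. Concretely, for $n=2$, $w=\tau_{\alpha_1^\vee}=\operatorname{diag}(t^{-1},t)=s_0s_1$, $a=1$, $b=2$: the four elements are $s_0s_1,\,s_1s_0s_1,\,s_0,\,s_1s_0$ of lengths $2,3,1,2$; the unique minimum is $ws_r=s_0$, which is neither $w$ nor $s_lws_r$. If you carry out your sign checks $u(\alpha_r)>0$, $u^{-1}(\alpha_l)>0$ honestly, you will find that the minimum in Case~2 depends on \emph{both} the sign of $\alpha(q)$ \emph{and} whether $\sigma(a)\lessgtr\sigma(b)$ (the ``small finite-root comparison'' you mention does genuinely change the answer, it is not absorbed). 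Specifically, when $\sigma(a)<\sigma(b)$ the minimum is $s_lw$ for $\ord(t_a)>\ord(t_b)$ and $ws_r$ for $\ord(t_a)<\ord(t_b)$, while the stated dichotomy ($u=w$ or $u=s_lws_r$) only holds when $\sigma(a)>\sigma(b)$. The later applications in the paper (e.g.\ where Case~2 is invoked to conclude $s_i\kappa<\kappa$ with $\sigma(a)<\sigma(b)$ and $\ord(t_a)>\ord(t_b)$) are actually consistent with the \emph{corrected} dichotomy and inconsistent with the stated one, so you should record the corrected version rather than try to reproduce the claim as written.
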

\begin{proof}
Recall from \Cref{actionOfq} that $q(a,b)=\ord(t_b)-\ord(t_a)$. 
Suppose first that $\ord(t_a)=\ord(t_b)$, which from \Cref{actionOfTau} is equivalent to $\tau_q(a,b)=(a,b)$.
It follows that 
$$ws_r=\sigma\tau_q s_{(a,b)}=\sigma s_{(a,b)}\tau_q= s_{(\sigma(a),\sigma(b)}\sigma\tau_q=s_lw$$
Recall that for $\alpha\in\proots$, we have $ws_\alpha>w$ if and only if $w(\alpha)>0$.
Now, $w(a,b)=\sigma\tau_q(a,b)=\sigma(a,b)=(\sigma(a),\sigma(b))$ being positive if and only if $\sigma(a)<\sigma(b)$, it follows that $w<ws_r$ if and only if $\sigma(a)<\sigma(b)$.
\\
\\
Suppose now that $\ord(t_a)\neq\ord(t_b)$, and set $k=\ord(t_a)-\ord(t_b)$.
Then 
$$w(a,b)=\sigma\tau_q(a,b)=(\sigma(a),\sigma(b))+k\delta$$ 
$$w^{-1}(\sigma(a),\sigma(b))=\tau_q\sigma^{-1}(\sigma(a),\sigma(b))=\tau_q(a,b)=(a,b)+k\delta$$
It follows that $w(a,b)$ and $w^{-1}(\sigma(a),\sigma(b))$ are positive if $k>0$ and negative if $k<0$.
Accordingly, we have $w<ws_r$, $w<s_lw$ if $\ord(t_a)>\ord(t_b)$, and $w>ws_r, w>s_lw$ if $\ord(t_a)<\ord(t_b)$.
Similarly, it follows from
$$s_lw(a,b)=s_l\sigma\tau_q(a,b)=s_l(\sigma(a),\sigma(b))+k\delta=(\sigma(b),\sigma(a))+k\delta$$ 
that $s_lw<s_lws_r$ if $\ord(t_a)>\ord(t_b)$ and $s_lw>s_lws_r$ if $\ord(t_a)<\ord(t_b)$.
Finally,
$$(ws_r)^{-1}(\sigma(a),\sigma(b))=s_r\tau_q^{-1}\sigma^{-1}(\sigma(a),\sigma(b))=s_r\tau_q^{-1}(a,b)=s_r(a,b)-k\delta=(b,a)-k\delta$$
implies $s_lws_r<ws_r$ if $\ord(t_a)<\ord(t_b)$ and $s_lws_r>ws_r$ if $\ord(t_a)>\ord(t_b)$.

\end{proof}
\ifdraft\marginpar{calculus.tex}\fi
\def\path{tableu}

\ifdraft\marginpar{tableu.tex}\fi
\def\e{\ensuremath{e}}
\section{Combinatorics of Parabolic Subgroups}
\label{tableau}
\label{dimofgp}
In this section, we associate a tableau to any given parabolic subgroup. 
We prove some combinatorial results on this tableau, which will be used in the succeeding sections.
We will follow the notation of this section for the rest of the paper.
\\
\\  
Let $V$ be the complex vector space $\mathbb C^n$ with standard basis $\left\{\e_i\middle\vert\ 1\leq i\leq n\right\}$.
The group \gl\ acts on $V$, and parabolic subgroups of \gl\ occur as stabilizers of partial flags in $V$. 
Consider a sequence $0=\Lambda_0<\Lambda_1<\ldots<\Lambda_{r-1}<\Lambda_r=n$.
Let $V_i$ be the subspace generated by $\left\{\e_j\middle\vert\ 1\leq j\leq d_i\right\}$, and \para\ the stabilizer of the flag $V_0\subset V_1\subset\ldots\subset V_{r-1}\subset V_r$.
Then $\para$ is the parabolic subgroup corresponding to  $\Simple_\para\subset\simple$ where $\Simple_\para=\simple\backslash\{\alpha_{\Lambda_i}|1\leq i<r\}$. 
We write \Para\ for the parabolic subgroup $\Borel\subset\Para\subset\gLhat$ corresponding to $\Simple_\para\subset\Simple$. 
Let $\lambda_i=\Lambda_i-\Lambda_{i-1}$ and $\boldsymbol\lambda=(\lambda_1,\ldots,\lambda_r)$.

\begin{figure}[h]
\centering
\begin{center}
\begin{tikzpicture}[scale=.35]\footnotesize
 \pgfmathsetmacro{\xone}{0}
 \pgfmathsetmacro{\xtwo}{ 16}
 \pgfmathsetmacro{\yone}{0}
 \pgfmathsetmacro{\ytwo}{16}

\begin{scope}<+->;
 grid
  \draw[step=1cm,gray,very thin] (\xone,\yone) grid (\xtwo,\ytwo);

\end{scope}

\begin{scope}[thick,red]
  \foreach \x in {5,8,14,15,16}
    \draw (0, 16) rectangle (5,11);
    \draw (5, 11) rectangle (8,8);
    \draw (8, 8) rectangle (10,6);
    \draw (10, 6) rectangle (11,5);
    \draw (11, 5) rectangle (14,2);
    \draw (14, 2) rectangle (16,0);
 
\draw[black] (2.5,13.5) node {\Large$\lambda_1$};
\draw[black] (6.5,9.5) node {\Large$\lambda_2$};
\draw[black] (9,7) node {\large$\bullet$};
\draw[black] (10.5,5.5) node {$\bullet$};
\draw[black] (12.5,3.5) node {\large$\bullet$};
\draw[black] (15,1) node {\large$\lambda_r$};
\end{scope}
\end{tikzpicture}
\end{center}
\caption{Elements of $\para$}
\label{diagramtofu}
\end{figure}
The elements of \para\ are exactly those matrices in \gl\ whose bottom left entries in \Cref{diagramtofu} are $0$.
Let $U$ be the unipotent radical of \para, and \u\ the Lie algebra of $U$. 
A matrix $X$ is in \u\ if and only if $X(V_i)\subset X(V_{i-1})$.
The algebra \u\ is nilpotent and contains exactly those matrices whose non-zero entries are confined to the top right corner of \Cref{diagramtofu}.
It is clear from the figure that \begin{align*}
\dim\gl/\para=\dim\u=\dfrac{n^2-\sum\lambda_i^2}{2}=\sum\limits_{i<j}\lambda_i\lambda_j
\end{align*}
\def\gln{\relax}%
\subsection{Tableaux}
\label{41}
We draw a left-aligned tableau with $r$ rows, with the $i^{th}$ row from top having $\lambda_i$ boxes. 
Fill the boxes of the tableau as follows: the entries of the $i^{th}$ row are the integers $k$ satisfying $d_i<k\leq d_{i+1}$, written in increasing order.
\\
\\
We denote by $\row(i)$ the set of entries in the $i^{th}$ row of the tableau.
The Weyl group $W_\para$ is the subgroup of $S_n$ that preserves the partition $\left\{1,\ldots,n\right\}=\bigsqcup\limits_i\row(i)$.
Let $\nu_i$ denote the number of boxes in the $i^{th}$ column from the left. 
The following are true:\begin{enumerate}
    \item The $\nu_i$ are non-increasing in $i$, i.e., $\boldsymbol\nu=(\nu_1,\ldots,\nu_s)$ is a partition of $n$.
    \item The tableau has $s:=\max\left\{\lambda_i\mid 1\leq i\leq r\right\}$ columns. 
    \item The tableau has $r:=\nu_1$ rows.
\end{enumerate}

We can now define a co-ordinate system $\fij\bullet.\bullet.$  on $\left\{1,\ldots,n\right\}$ as follows:
For $1\leq i\leq r,\ 1\leq j\leq\nu_i$, let $\fij i.j.$ denote the $j^{th}$ entry (from the top) of the $i^{th}$ column. 
Note that $\fij i.j.$ need not be in $\row(j)$.
\\
\\ 
Finally, let $\f j,k.$ denote the elementary matrix $\E_{\fij i.j.,\fij i.k.}$.
Observe that $\fij i.b.=\fij j.c.$ if and only if $i=j$ and $b=c$.
In particular, \begin{align}\label{delta}
    \f a,b.F^j_{c,d}=\delta_{ij}\delta_{bc}\f a,d.
\end{align}

\subsection{\red\ and \blue}
\label{redb4blue}
We split the set $\left\{1,\ldots,n\right\}$ into disjoint subsets \red\ and \blue, depending on their positions in the tableau. 
Let $\set_1=\left\{\fij i.1.\mid 1\leq i\leq s\right\}$ be the set of entries which are topmost in their column.
We write $\set_1(i)=\set_1\bigcap\row(i)$.
For convenience, we also define $\set_2(i)=\row(i)\backslash\set_1(i)$ and $\set_2=\bigcup\limits_i\set_2(i)$.
\\
\\
The set $\red(i)$ is the collection of the $\#\set_1(i)$ smallest entries in $\row(i)$: 
$$\red(i):=\left\{j\mid\Lambda_{i-1}<j\leq\Lambda_i-\max\left\{\lambda_j\mid j<i\right\}\right\}$$
We set $\blue(i)=\row(i)\backslash\red(i)$, $\red=\bigsqcup\limits_i\red(i)$, and $\blue=\bigsqcup\limits_i\blue(i)$.
The elements of $\red(i)$ are smaller than the elements of $\blue(i)$.
\\
\\
The elements of \red, arranged in increasing order are written $l(1),\ldots,l(s)$.
We enumerate the elements of $\blue$, written row by row from bottom to top, each row written left to right, as $m(1),\ldots,m(n-s)$.

\ifdraft\marginpar{example.tex}\fi
    \def\ten{10}
    \def\eleven{11}
    \def\eleven{11}
    \def\twelve{12}
    \def\thirteen{13}
    \def\fourteen{14}
    \def\fifteen{15}
    \def\sixteen{16}
    \def\seventeen{17}

\begin{example} Let $n=17$ and the sequence $(\Lambda_i)$ be $(1,5,9,11,17)$. 
The corresponding tableau is
\begin{align*}
    \young(1,2345,6789,\ten\eleven,\twelve\thirteen\fourteen\fifteen\sixteen\seventeen)
\end{align*}
\begin{itemize}
    \item $r=5$, $s=6$.
    \item The sequence $(\lambda_i)$ is $(1,4,4,2,6)$.
    \item The sequence $(\nu_i)$ is $(5,4,3,3,1,1)$.
    \item $\row(3)=\left\{6,7,8,9\right\}$.
    \item $\fij1.4.=10,\ \fij4.3.=15,\ \fij6.1.=17$ etc.
    \item $F^1_{2,4}=\E_{2,10},\ F^3_{3,3}=\E_{14,14}$ etc. 
    \item $\set_1=\left\{1,3,4,5,16,17\right\}$.
    \item $\red=\left\{1,2,3,4,12,13\right\}$.
    \item The sequence $m(i)$ is $(14,15,16,17,10,11,6,7,8,9,5)$.
\end{itemize}
\end{example}
\ifdraft\marginpar{example.tex}\fi

\ifdraft\marginpar{appendix.tex}\fi

\def\e{\ensuremath{e}}
\begin{prop}
Let $\boldsymbol{\nu}=(\nu_1,\ldots,\nu_r)$ and $\boldsymbol{\nu'}=(\nu'_1,\ldots,\nu'_s)$ be conjugate partitions, written in non-increasing order. 
Then \begin{align*}
    \sum\limits_{i=1}^r\nu_i^2=\sum\limits_{i=1}^s\sum\limits_{j=1}^s\min\{\nu'_i,\nu'_j\}=\sum\limits_{i=1}^s(2i-1)\nu'_i.
\end{align*}
\end{prop}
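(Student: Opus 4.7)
The plan is to prove the two equalities in succession, using only the defining duality of conjugate partitions together with a switch in the order of summation.

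For the first equality, I would start from the elementary identity characterizing conjugate partitions: an entry $\nu_k$ counts those columns that reach height at least $k$, that is
\[
  \nu_k = \#\{\, i : \nu'_i \geq k\,\}.
\]
Squaring gives $\nu_k^2 = \#\{(i,j) : \nu'_i \geq k \text{ and } \nu'_j \geq k\}$. Summing over $k$ and swapping the order of summation,
\[
  \sum_{k=1}^{r} \nu_k^2 \;=\; \sum_{(i,j)} \#\bigl\{\, k\geq 1 : k \leq \nu'_i \text{ and } k \leq \nu'_j\,\bigr\} \;=\; \sum_{i=1}^{s}\sum_{j=1}^{s} \min\{\nu'_i,\nu'_j\},
\]
which is the first stated equality.

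For the second equality, I would use that $\nu'$ is written in non-increasing order, so $\min\{\nu'_i, \nu'_j\} = \nu'_{\max(i,j)}$. Splitting the double sum into diagonal and off-diagonal parts,
\[
  \sum_{i,j} \min\{\nu'_i,\nu'_j\} \;=\; \sum_{i} \nu'_i + 2\sum_{i<j}\nu'_j.
\]
In the second term, for fixed $j$ the number of indices $i$ with $i<j$ is $j-1$, so it becomes $2\sum_{j}(j-1)\nu'_j$. Combining,
\[
  \sum_{i,j}\min\{\nu'_i,\nu'_j\} \;=\; \sum_{j}\bigl(1 + 2(j-1)\bigr)\nu'_j \;=\; \sum_{j=1}^{s}(2j-1)\nu'_j,
\]
which finishes the proof.

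Neither step presents a real obstacle; the only thing to be careful about is keeping track of the ranges of summation (using that $\nu'_i = 0$ for $i>s$ and $\nu_k = 0$ for $k>r$, so the sums can be harmlessly extended or truncated), and invoking the ordering $\nu'_1 \geq \nu'_2 \geq \cdots$ exactly once, in the step where $\min\{\nu'_i,\nu'_j\}$ is identified with $\nu'_{\max(i,j)}$.
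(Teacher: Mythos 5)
Your proof is correct. For the second equality your argument is essentially identical to the paper's: both use $\min\{\nu'_i,\nu'_j\}=\nu'_{\max\{i,j\}}$ (valid since $\nu'$ is non-increasing) and split the double sum into diagonal and off-diagonal parts. For the first equality, however, you take a genuinely different route. You argue by direct double-counting: starting from $\nu_k=\#\{i : \nu'_i\geq k\}$, you interpret $\nu_k^2$ as counting ordered pairs of columns both of height at least $k$, then swap the order of summation over $k$ and $(i,j)$ to land on $\sum_{i,j}\min\{\nu'_i,\nu'_j\}$. The paper instead introduces an inner-product device: it fills the boxes of the Young diagram with standard basis vectors (box in row $i$ gets $e_i$), lets $v_i$ be the column sum and $\mathbf v$ the total sum, and computes $\langle\mathbf v,\mathbf v\rangle$ two ways --- once as $\sum_i\sum_j\langle v_i,v_j\rangle=\sum_{i,j}\min\{\nu'_i,\nu'_j\}$ and once as $\langle\sum\nu_ie_i,\sum\nu_ie_i\rangle=\sum\nu_i^2$. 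The two arguments are encoding the same combinatorial fact (pairs of boxes in the same column), but yours is more elementary and self-contained, while the paper's linear-algebra packaging lets it reuse $\langle\mathbf v,\mathbf v\rangle$ cleanly for both claimed equalities from a single object.
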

\begin{proof}
Let $\left\langle\bullet,\bullet\right\rangle$ be the dot product on $V$ given by $\langle \e_i,\e_j\rangle=\delta_{ij}$. 
Consider the Young diagram $\mathrm{\mathbf Y}$ whose $i^{th}$ row has $\nu_i$ boxes. 
Fill each box in the $i^{th}$ row of $\mathrm{\mathbf Y}$ with $\e_i$.
Let $v_i$ be the sum of all vectors in the $i^{th}$ coloumn and $\mathbf{v}$ the sum of the vectors in all the boxes in $\mathrm{\mathbf Y}$. 
Observe that $
    v_i =\sum\limits_{1\leq i\leq\nu'_i}\e_i$
and so\begin{align*}
    \langle v_i, v_j\rangle &=\left\langle\sum\limits_{k=1}^{\nu_i'}\e_k,\sum\limits_{k=1}^{\nu_j'}\e_k\right\rangle
                            =\min\{\nu'_i,\nu'_j\}\\
    \langle\mathbf v,\mathbf v\rangle   &=\sum\limits_{i=1}^s\sum\limits_{j=1}^s\langle v_i,v_j\rangle=\sum\limits_{i=1}^s\sum\limits_{j=1}^s\min\{\nu'_i,\nu'_j\}
\end{align*}
But we also have \begin{align*}
    \langle\mathbf{v},\mathbf v\rangle  &=\left\langle\sum\nu_i\e_i,\sum\nu_i\e_i\right\rangle
                                        =\sum\limits_{i,j}\delta_{ij}\nu_i\nu_j=\sum\nu_i^2 
\end{align*}
which gives the first part of the equality.
For the second part, we use $\min\{\nu'_i,\nu'_j\}=\nu'_{\max\{i,j\}}$ to get \begin{align*}
    \left\langle\mathbf v,\mathbf v\right\rangle    &=\left\langle\sum v_i,\sum\ v_j\right\rangle
                                                    =\sum\left\langle v_i, v_i\right\rangle+2\sum\limits_{i>j}\left\langle v_i, v_j\right\rangle\\
                                                    &=\sum\nu'_i+2\sum\limits_{i=1}^s\sum\limits_{j=1}^{i-1}\nu'_i 
                                                    =\sum\nu'_i+2\sum(i-1)\nu'_i
                                                    =\sum(2i-1)\nu'_i
\end{align*}
\end{proof}
\ifdraft\marginpar{appendix.tex}\fi
\begin{cor}
\label{magic}
Let $\boldsymbol\lambda, \boldsymbol\nu$ be as in \Cref{41}. Then:\begin{align*}
    \sum\limits_{i=1}^r\lambda_i^2=\sum\limits_{i=1}^s(2i-1)\nu_i.
\end{align*}\end{cor}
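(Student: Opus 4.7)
The plan is to reduce the corollary to the preceding proposition, after accounting for the fact that $\boldsymbol\lambda = (\lambda_1, \ldots, \lambda_r)$ need not be written in non-increasing order (see the running example where $\boldsymbol\lambda = (1,4,4,2,6)$), whereas the proposition applies to genuine partitions.

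First, I would observe that the quantity $\sum_{i=1}^r \lambda_i^2$ is symmetric in the $\lambda_i$, hence invariant under reordering. So if $\widetilde{\boldsymbol\lambda} = (\widetilde\lambda_1, \ldots, \widetilde\lambda_r)$ denotes the rearrangement of $\boldsymbol\lambda$ in non-increasing order, then $\sum_{i=1}^r \lambda_i^2 = \sum_{i=1}^r \widetilde\lambda_i^2$.

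Next, I would verify that $\boldsymbol\nu$ is the conjugate partition of $\widetilde{\boldsymbol\lambda}$. By the definition in \Cref{41}, $\nu_j$ is the number of boxes in the $j$-th column of the left-aligned tableau with rows of lengths $\lambda_1, \ldots, \lambda_r$, which equals $\#\{i \mid \lambda_i \geq j\}$. This count depends only on the multiset $\{\lambda_1, \ldots, \lambda_r\}$, and agrees with the usual formula $\nu_j = \#\{i \mid \widetilde\lambda_i \geq j\}$ for the conjugate partition of $\widetilde{\boldsymbol\lambda}$. The non-increasing property of $\boldsymbol\nu$ stated in \Cref{41} is then automatic.

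With these two observations in hand, the corollary follows by applying the preceding proposition to the conjugate pair $\widetilde{\boldsymbol\lambda}$ and $\boldsymbol\nu$ (playing the roles of $\boldsymbol\nu$ and $\boldsymbol\nu'$ in the proposition respectively):
\begin{align*}
    \sum_{i=1}^r \lambda_i^2 \;=\; \sum_{i=1}^r \widetilde\lambda_i^2 \;=\; \sum_{i=1}^s (2i-1)\nu_i.
\end{align*}
There is no real obstacle here; the only subtlety worth flagging explicitly is the distinction between $\boldsymbol\lambda$ (an unordered composition read off the tableau row-by-row) and its sorted version $\widetilde{\boldsymbol\lambda}$ (the actual partition whose conjugate is $\boldsymbol\nu$). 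Once this is pointed out, the result is an immediate specialization of the proposition.
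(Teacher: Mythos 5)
Your proof is correct and matches the paper's argument: both reduce to the preceding proposition by noting that $\sum\lambda_i^2$ is invariant under reordering and that $\boldsymbol\nu$ is the conjugate of the sorted version of $\boldsymbol\lambda$. The paper states this observation in a single sentence; you simply spell out the same idea in more detail.
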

\begin{proof}
The collection $(\nu_1,\ldots,\nu_s)$ is a partition of $n$, and the collection $(\lambda_1,\ldots,\lambda_r)$ is a permutation of the conjugate partition $(\nu'_1,\ldots,\nu'_r)$.
\end{proof}

\ifdraft\marginpar{tableu.tex}\fi
\ifdraft\marginpar{z.tex}\fi
\section{Compactification of $T^*\gl/\para$}
\label{formula:kappa}
Let notation be as in \Cref{tableau}.
Let $\kappa\in\What$ be represented by the affine permutation matrix $$\sum\limits_{i=1}^st^{\nu_i-1}\E_{i,l(i)}+\sum\limits_{i=1}^{n-s}t^{-1}\E_{i+s,m(i)}$$
Let $\phi_\para$ be as in \Cref{defnPhi}.
In this section, we show that the image of $\phi_\para$ is contained in $X_\Para(\kappa)$. 
We first construct a matrix \Z\ whose \para-orbit is dense in \u\ and compute the Bruhat cell \w\ containing $(1-t^{-1}Z)$.
We discuss the relationship between \w\ and $\kappa$ and then prove $\image(\phi_\para)\subset X_\Para(\kappa)$.

\subsection{The Matrix \Z}
\label{defn:z}
Recall the co-ordinate system $\fij\bullet.\bullet.$ from \Cref{41}, and the elementary matrices $\f a,b.$ from \Cref{delta}.
Let \Z\ be the $n\times n$ matrix given by 
$$\Z=\sum\limits_{i=1}^{s}\sum\limits_{j=1}^{\nu_i-1}\f j,j+1.$$ 
Then 
$$Z\e_{\fij i.j.}=\begin{cases}\e_{\fij i.j-1.}&\text{if }\ j>1\\ 0&\text{if }\ j=1\end{cases}$$
It follows that $\left\{\e_{\fij i.\nu_i.}\middle\vert\ 1\leq i\leq s\right\}$ is a minimal generating set for $V$ as a module over $\mathbb C[\Z]$ (the $\mathbb C$-algebra generated by \Z).
In particular, \begin{enumerate}
    \item   $\Z(V_i)\subset V_{i-1}$, i.e., $\Z\in\u$, where \u\ is as in \Cref{defCotan}.
    \item   The Jordan type of \Z\ is $\boldsymbol\nu$. 
\end{enumerate}
An element in $x\in\u$ is called a {\em Richardson element} of \u\ if $\dim(Gx)=2\dim\gl/\para$.
Our next lemma shows that \Z\ is a Richardson element in \u.
A comprehensive study of Richardson elements in \u\ can be found in \cite{he}.

\begin{lemma}
\label{conjugacy}
\label{dense}
The \para-orbit of \Z\ is dense in \u.
Equivalently, the \gl-orbit of $(1,\Z)$ is dense in $T^*\gl/\para=\gl\times^\para\u$.
\end{lemma}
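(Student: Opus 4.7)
The plan is to split the proof into two pieces: first show that the $\para$-orbit $\para\cdot\Z\subseteq\u$ is dense, and then deduce the equivalent statement for the $\gl$-orbit of $(1,\Z)$ from the fiber bundle structure $T^*\gl/\para=\gl\times^\para\u\to\gl/\para$. For the latter, $\gl$ acts transitively on the base $\gl/\para$, so the $\gl$-orbit of $(1,\Z)$ projects onto $\gl/\para$, and one checks directly from the definition of the equivalence relation $\til$ in \Cref{defCotan} that its fiber over $[1]$ is the image of $\para\cdot\Z$ inside the fiber $\u$. Hence the two density statements are equivalent.

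For the density of $\para\cdot\Z$ in $\u$, since $\u$ is irreducible and $\para\cdot\Z$ is an orbit of a connected algebraic group, it suffices to show $\dim(\para\cdot\Z)=\dim\u$. Writing $\para^\Z$ and $\gl^\Z$ for the respective isotropy subgroups under the adjoint action, one has $\para^\Z=\para\cap\gl^\Z\subseteq\gl^\Z$ and therefore
\begin{align*}
    \dim(\para\cdot\Z)=\dim\para-\dim\para^\Z\ge\dim\para-\dim\gl^\Z.
\end{align*}
Since $\dim\para=\dim L+\dim\u$ where $L$ is a Levi complement of the unipotent radical of $\para$ with $\dim L=\sum_i\lambda_i^2$, and since the opposite inequality $\dim(\para\cdot\Z)\le\dim\u$ is automatic from $\para\cdot\Z\subseteq\u$, the density reduces to the single bound $\dim\gl^\Z\le\sum_i\lambda_i^2$.

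The main technical point, and the principal obstacle, is the computation of $\dim\gl^\Z$. From the description of the action of $\Z$ on the basis $\{e_{\chi(i,j)}\}$ in \Cref{defn:z}, the $\mathbb{C}[\Z]$-module structure on $V$ decomposes as $V\cong\bigoplus_{i=1}^s\mathbb{C}[\Z]/(\Z^{\nu_i})$, so $\Z$ is a nilpotent matrix of Jordan type $\boldsymbol\nu$. The classical formula for the centralizer of a nilpotent in $\mathfrak{gl}_n$ then gives
\begin{align*}
    \dim\gl^\Z=\sum_{i,j=1}^s\min(\nu_i,\nu_j)=\sum_{i=1}^s(2i-1)\nu_i,
\end{align*}
which by \Cref{magic} equals $\sum_i\lambda_i^2$, completing the dimension count. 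A self-contained derivation is also available: expanding $[X,\Z]=0$ in the basis $\{F^i_{a,b}\}$ via \Cref{delta} yields the recurrence $X_{\chi(a,b+1),\chi(c,d+1)}=X_{\chi(a,b),\chi(c,d)}$ on each column-pair block, together with boundary relations forcing $X_{\chi(a,b),\chi(c,d)}=0$ unless $d-b\ge\max(0,\nu_c-\nu_a)$, giving exactly $\min(\nu_a,\nu_c)$ free parameters per block.

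Putting the pieces together, the orbit $\para\cdot\Z$ has the maximal possible dimension $\dim\u$ and so is open and dense in the irreducible variety $\u$; the equivalent density statement for $\gl\cdot(1,\Z)$ then follows from the reduction of the first paragraph.
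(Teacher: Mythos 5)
Your overall strategy matches the paper's: reduce density of the $\para$-orbit to a dimension count, and reduce that to an upper bound on the dimension of the centralizer of $\Z$. The difference is in how that bound is obtained. You invoke the classical formula $\dim\operatorname{Cent}_{\mathfrak{gl}_n}(\Z)=\sum_{i,j}\min(\nu_i,\nu_j)$ for a nilpotent of Jordan type $\boldsymbol\nu$ (and sketch the block-by-block computation as a fallback), then feed it into \Cref{magic}. The paper instead rederives this bound from scratch: using that $V\cong\bigoplus\mathbb{C}[\Z]/(\Z^{\nu_i})$, it embeds $C(\Z)$ into $\bigoplus_{i=1}^s\ker\Z^{\nu_i}\subset V^{\oplus s}$ via $g\mapsto(ge_{\chi(i,\nu_i)})_i$ and counts dimensions there. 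Your route is shorter and leans on a standard fact; the paper's is self-contained, and its map $\rho\circ\zeta$ is reused later in the proof of \Cref{centralizerInP} to show $C(\Z)$ is connected, so the extra work pays a dividend. One point to tighten: you write $\dim L=\sum\lambda_i^2$ and $\dim\gl^\Z=\sum\min(\nu_i,\nu_j)=\sum\lambda_i^2$, but with $\gl=SL_n$ both of these are actually $\sum\lambda_i^2-1$ (the stated values are the $GL_n$ quantities). The two off-by-one errors cancel in your final inequality, so the conclusion is unaffected, but the individual claims as stated are not quite right; the paper avoids this by enlarging $\para$ to $\widetilde\para\subset GL_n$ (adjoining the scalars $K$) and working with $GL_n$-centralizers throughout.
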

\begin{proof}
Observe that the two statements are clearly equivalent.
Let $K$ be the group of non-zero scalar matrices over $\mathbb C$, and $\widetilde\para$ the subgroup of  $\operatorname{GL}_n(\mathbb C)$ generated by $K$ and \para.
Since $K$ acts trivially on \u, it is enough to show that the orbit of \Z\ under $\widetilde\para$ is dense in \u.
Let $C_{\widetilde\para}(\Z)$ (resp. $C(\Z)$) be the stabilizer of \Z\ under the conjugation action of $\widetilde\para$ (resp. $\operatorname{GL}_n(\mathbb C)$).
It is enough to show that $\dim C_{\widetilde\para}(Z)=\dim\widetilde\para-\dim\u$.
Observe first that \begin{align*}
    \dim C(\Z)\geq\dim C_{\widetilde\para}(\Z)\geq\dim\widetilde\para-\dim\u=\sum\limits_{i=1}^r\lambda_i^2.
\end{align*}
It is now enough to show that $\dim C(\Z)\leq\sum\lambda_i^2$, from which the result will follow.
\\
\\
Let $\zeta:\operatorname{GL}_n(\mathbb C)\hookrightarrow V^{\oplus n}$ be the map $g\mapsto(ge_1,\ldots,ge_n)$, and $\rho:V^{\oplus n}\rightarrow V^{\oplus s}$ the projection $(v_1,\ldots,v_n)\mapsto(v_{\fij 1.\nu_1.},\ldots,v_{\fij s.\nu_s.})$.
As a $\mathbb C[\Z]$-module, $V$ is generated by the set $\left\{\e_{\fij i.\nu_i.}\middle\vert\ 1\leq i\leq s\right\}$. 
Since $V$ is generated (as a $\mathbb C[\Z]$-module) by the set $\left\{\e_{\fij i.\nu_i.}\middle\vert\ 1\leq i\leq s\right\}$,
the action of $g\in C(\Z)$ on $V$ is determined by its action on the set $\left\{\e_{\fij i.\nu_i.}\middle\vert\ 1\leq i\leq s\right\}$. 
In particular, the map $\left(\rho\circ\zeta\right)\vert_{C(\Z)}$ is injective.
Furthermore, since $\Z^{\nu_i}\e_{\fij i.\nu_i.}=0$, it follows that $\image(p\circ\zeta)\vert_{C(\Z)}\subset\bigoplus\limits_{i=1}^s\ker\Z^{\nu_i}$.
The kernel of $\Z^{\nu_i}$ being spanned by $\left\{e_{\fij j.k.}\middle\vert\ j\leq s,\ k\leq\nu_i,\ k\leq\nu_j\right\}$, we deduce \begin{align*}
    \dim C(\Z)          \leq&\dim\left(\bigoplus_{i=1}^s\ker\Z^{\nu_i}\right)
                        =\sum\limits_{i=1}^s\dim\left(\ker\Z^{\nu_i}\right)\\
                        &=\sum\limits_{i=1}^s\sum\limits_{j=1}^s\#\left\{\e_{\fij j.k.}\mid k\leq\min\left\{\nu_i,\nu_j\right\}\right\} \\
                        &=\sum\limits_{i=1}^s\sum\limits_{j=1}^s\min\left\{\nu_i,\nu_j\right\}
                        =\sum\limits_{i=1}^r\lambda_i^2
\end{align*}
where the last equality is from \Cref{magic}.
\end{proof}

\begin{cor}
\label{centralizerInP}
We have the equality $C_\gl(\Z)=C_\para(\Z)$. 
\end{cor}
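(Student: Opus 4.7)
The plan is to leverage the dimension and irreducibility information already extracted in the proof of the preceding lemma, and then chase scalars through $\widetilde{\para}$. The inclusion $C_\para(\Z)\subseteq C_\gl(\Z)$ is immediate; all the content lies in the reverse inclusion.

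First I would upgrade the inequalities from the previous proof to equalities. The chain
\[
    \sum_{i=1}^r\lambda_i^2\;\leq\;\dim C_{\widetilde\para}(\Z)\;\leq\;\dim C(\Z)\;\leq\;\sum_{i=1}^r\lambda_i^2
\]
forces $\dim C_{\widetilde\para}(\Z)=\dim C_{GL_n}(\Z)$. Now observe that $C_{GL_n}(\Z)$ is irreducible: it is the open subset $\{g\in\mathfrak c\mid \det g\neq 0\}$ of the linear subspace $\mathfrak c=\{A\in M_n(\mathbb C)\mid A\Z=\Z A\}$, and $\operatorname{id}\in\mathfrak c$ ensures this open subset is non-empty. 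A closed subvariety of the same dimension in an irreducible variety must coincide with it, so $C_{\widetilde\para}(\Z)=C_{GL_n}(\Z)$.

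Next I would pull this identity down from $GL_n$ to $SL_n=\gl$. Take $g\in C_\gl(\Z)$. Then $g\in C_{GL_n}(\Z)=C_{\widetilde\para}(\Z)\subseteq\widetilde\para=K\cdot\para$, so $g=k\,p$ for some scalar $k\in K$ and $p\in\para$. Since $\det p=1$ (as $\para\subseteq SL_n$) and $\det g=1$, we obtain $k^n=1$. Hence $k\,\mathrm{id}$ is a scalar matrix of determinant one, and being central it lies in every standard parabolic of $SL_n$; in particular $k\,\mathrm{id}\in\para$. Therefore $g=(k\,\mathrm{id})p\in\para$, and combined with $g\Z=\Z g$ this gives $g\in C_\para(\Z)$.

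The only non-formal ingredient is the equality $C_{\widetilde\para}(\Z)=C_{GL_n}(\Z)$, which I anticipate being the main point to justify; once the irreducibility of $C_{GL_n}(\Z)$ as an open subset of a linear space is noted, everything else is book-keeping with scalars and determinants.
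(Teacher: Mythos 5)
Your proof is correct and follows the paper's overall strategy: promote the dimension inequalities from the proof of \Cref{dense} to equalities, establish that $C_{GL_n}(\Z)$ is irreducible, and conclude that the closed subgroup $C_{\widetilde\para}(\Z)$ of equal dimension must coincide with it. The one genuine divergence is the irreducibility step. The paper reuses its map $\rho\circ\zeta$, realizing $C(\Z)$ as a dense subset of the linear space $\bigoplus_i\ker\Z^{\nu_i}$ whose complement has real codimension at least two, and deduces connectedness (hence, for an algebraic group, irreducibility) from that. You instead invoke the textbook fact that the centralizer of any matrix in $GL_n$ is the nonempty principal open set $\{\det\neq 0\}$ inside the linear centralizer algebra $\mathfrak{c}=\{A\in M_n:\ A\Z=\Z A\}$, hence irreducible directly; this is shorter and sidesteps the codimension-and-connectedness detour. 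Your closing paragraph of scalar-chasing spells out what the paper compresses into the remark that $C_\gl(\Z)=C(\Z)\cap\gl$ and $C_\para(\Z)=C_{\widetilde\para}(\Z)\cap\gl$; both formulations rest on the same observation that $\widetilde\para\cap SL_n=\para$, since a scalar of determinant one is a diagonal matrix and therefore already lies in $\para$.
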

\begin{proof}
Since $C_\gl(\Z)=C(\Z)\bigcap\gl$ and $C_\para(\Z)=C_{\widetilde\para}(\Z)\bigcap\gl$, it is enough to show $C(\Z)=C_{\widetilde\para}(\Z)$.
The map $p\circ\zeta$ embeds $C(\Z)$ as a dense, hence irreducible, subset of $V^{\oplus s}$.
In particular the real codimension of $C(\Z)$ in $V^{\oplus s}$ is at least $2$, and so $C(\Z)$ is connected. 
It follows that $C_{\widetilde\para}(\Z)$, being a codimension $0$ closed subvariety of $C(\Z)$, must equal $C(\Z)$.
\end{proof}

\ifdraft\marginpar{z.tex}\fi
\ifdraft\marginpar{main.tex}\fi

\begin{prop}
\label{kappa}
\label{wpp}
Let $\Borel\w\Borel$ be the Bruhat cell containing $1-t^{-1}\Z$. 
A lift of \w\ to \N\ is given by \begin{align*}
    \overset\sim\w=\sum\limits_{i=1}^s\left(t^{\nu_i-1}\f\nu_i,1.-\sum_{j=2}^{\nu_i}t^{-1}\f j-1,j.\right)
\end{align*}
\end{prop}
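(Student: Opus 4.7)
The argument splits into two steps: verify that $\overset\sim\w$ is a valid lift of an element of $\What$ to $\N$, then exhibit $B_1,B_2\in\Borel$ with $B_1\overset\sim\w B_2=1-t^{-1}\Z$.

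The first step is combinatorial. Recalling from \Cref{41} that $\fij\bullet.\bullet.$ sets up a bijection between $\{1,\ldots,n\}$ and the pairs $(j,i)$ with $1\leq j\leq\nu_i$, the nonzero entries of $\overset\sim\w$ --- the $s$ wrap-around terms $t^{\nu_i-1}F^i_{\nu_i,1}$ and the $n-s$ shift terms $-t^{-1}F^i_{j-1,j}$ --- are seen to hit each row and each column exactly once. The underlying permutation decomposes into $s$ disjoint cycles of lengths $\nu_1,\ldots,\nu_s$ (one per column of the tableau), hence has signature $(-1)^{n-s}$; the product of the monomial coefficients is $t^{n-s}\cdot(-t^{-1})^{n-s}=(-1)^{n-s}$, so $\det\overset\sim\w=1$ and $\overset\sim\w\in\N$.

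The structural observation for the second step is that both $1-t^{-1}\Z$ and $\overset\sim\w$ preserve the decomposition $\mathbb C^n=\bigoplus_{i=1}^s\operatorname{span}\{e_{\fij i.1.},\ldots,e_{\fij i.\nu_i.}\}$, because $\Z$ acts on each summand as a single Jordan block and every $F^i_{\bullet,\bullet}$ is internal to a single summand. Everything thus reduces to a uniform blockwise computation: on a block of size $\nu$, in the local indexing $1,\ldots,\nu$, the restrictions are $A=I-t^{-1}J$ and $W=-t^{-1}J+t^{\nu-1}E_{\nu,1}$, with $J$ the super-diagonal shift. My ansatz is $b_1=I-tJ_L$ (a single subdiagonal of $-t$'s) and $b_2=I-\sum_{k=1}^{\nu-1}t^kE_{k+1,1}$ (the identity with a single staircase first column), both manifestly in the Iwahori. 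A direct multiplication shows that $b_1W$ agrees with $A$ in every column except the first, where it reads $t^{\nu-1}e_\nu$ instead of $e_1$; right-multiplication by $b_2$, which modifies only the first column, then exactly cancels the wrap-around entry and restores the identity diagonal, yielding $b_1Wb_2=A$.

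Finally, since each block satisfies $\fij i.1.<\fij i.2.<\cdots<\fij i.\nu_i.$, every subdiagonal entry of the blockwise $b_1^{(\nu_i)},b_2^{(\nu_i)}$ remains subdiagonal in the global ordering and lies in $t\mathbb C[[t]]$; so the direct sums $B_1,B_2$ lie in the Iwahori $\pi^{-1}(\borel)$ and lift via $\mu$ to $\Borel$, placing $1-t^{-1}\Z$ in $\Borel\w\Borel$. The main obstacle I anticipate is locating the correct form of $b_2$: the asymmetry between $b_1$ (one subdiagonal) and $b_2$ (one staircase column) is not the most natural first guess, and the symmetric choices one tries first do not close up.
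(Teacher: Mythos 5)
Your proof is correct and takes essentially the same route as the paper: both reduce to a blockwise computation on the columns of the tableau (the subspaces $\operatorname{span}\{e_{\fij i.1.},\ldots,e_{\fij i.\nu_i.}\}$) and exhibit explicit Iwahori elements realizing the Bruhat-cell membership. The paper writes $b(1-t^{-1}Z)c=\overset\sim\w$ with $b$ a full lower-triangular $\sum t^{k-j}F^i_{k,j}$ and $c$ a staircase in the first column; your $b_1=(I-tJ_L)$ and $b_2$ are precisely $b^{-1}$ and $c^{-1}$, so you verify the equivalent identity $b_1\overset\sim\w b_2=1-t^{-1}Z$. Your choice is arguably a bit cleaner since $b_1$ has a single subdiagonal rather than a full lower triangle, and your column-by-column telescoping check is tidy; it also sidesteps a small typo in the paper's definition of $c_i$ (the staircase term should be $t^{j-1}F^i_{j,1}$, not $t^{j-1}F^i_{j-1,1}$, for the displayed cancellation to work). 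One small point worth making explicit in a final write-up: you need the monotonicity $\fij i.1.<\fij i.2.<\cdots<\fij i.\nu_i.$ (which holds because column entries increase down a column of the tableau) to conclude that a locally strictly-lower-triangular entry stays globally strictly-lower-triangular, so $B_1,B_2$ reduce to upper-triangular matrices mod $t$; you state this but it deserves a sentence of justification since the coordinate system $\fij\bullet.\bullet.$ is column-wise rather than sequential.
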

\begin{proof}
For $1\leq i\leq s$, let \begin{align*}
    b_i         &:=\sum\limits_{j=1}^{\nu_i}\sum\limits_{k=j}^{\nu_i}t^{k-j}\f k,j. 
                =\sum\limits_{j=1}^{\nu_i}\sum\limits_{k=0}^{\nu_i-j}t^k\f j+k,j. \\
    c_i         &:=\sum\limits_{j=1}^{\nu_i}\f j,j.+\sum\limits_{j=2}^{\nu_i}t^{j-1}\f j-1,1. \\
    \Z_i        &:=\sum\limits_{j=1}^{\nu_i}\f j,j.-t^{-1}\sum\limits_{j=1}^{\nu_i-1}\f j,j+1.
\end{align*}
We have\begin{align*}
    b_i\Z_ic_i  &=\left(\sum\limits_{j=1}^{\nu_i}\sum\limits_{k=0}^{\nu_i-j}t^k\f j+k,j.\right)\left(\sum\limits_{j=1}^{\nu_i}\f j,j.-t^{-1}\sum\limits_{j=1}^{\nu_i-1}\f j,j+1.\right)c_i\\
                &=\left(\sum\limits_{j=1}^{\nu_i}\sum\limits_{k=0}^{\nu_i-j}t^k\f j+k,j.-\sum_{j=1}^{\nu_i-1}\sum_{k=0}^{\nu_i-j}t^{k-1}\f j+k,j+1.\right)c_i\\
                &=\left(\sum\limits_{j=1}^{\nu_i}\sum\limits_{k=0}^{\nu_i-j}t^k\f j+k,j.-\sum_{j=2}^{\nu_i}\sum_{k=-1}^{\nu_i-j}t^k\f j+k,j.\right)c_i\\
                &=\left(\sum_{k=0}^{\nu_i-1}t^k\f 1+k,1.-\sum_{j=2}^{\nu_i}t^{-1}\f j-1,j.\right)\left(\sum\limits_{j=1}^{\nu_i}\f j,j.+\sum\limits_{j=2}^{\nu_i}t^{j-1}\f j-1,1.\right)\\
                &=\sum_{k=0}^{\nu_i-1}t^k\f k+1,1.-\sum_{j=2}^{\nu_i}t^{-1}\f j-1,j.-\sum\limits_{j=2}^{\nu_i}t^{j-2}\f j-1,1.\\
                &=t^{\nu_i-1}\f\nu_i,1.-\sum_{j=2}^{\nu_i}t^{-1}\f j-1,j.
\end{align*}
Observe that $1-t^{-1}\Z=\sum\limits_{i=1}^s\Z_i$.
It follows from \Cref{delta} that for $i\neq j$, $b_i\Z_j=0$ and $\Z_jc_i=0$.
Writing $b=\sum\limits_ib_i$ and $c=\sum\limits_ic_i$, we see \begin{align*}b(1-t^{-1}\Z)c=\sum\limits_i b_i\Z_ic_i=\w\end{align*}
The result now follows from the observation $b,c\in\Borel$.
\end{proof}

\begin{lemma}
\label{samecoset}
There exist $w_g\in\What_{\gOhat},\ w_p\in\What_\Para$ such that $\w=w_g\kappa w_p$.
In particular, $X_\Para(\w)\subset\overline{\gOhat\kappa\Para}(\mod\Para)$.
\end{lemma}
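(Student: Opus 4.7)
The plan is to compare the affine permutation matrices of $w$ and $\kappa$ and show that they represent the same double coset in $\What_\gOhat\backslash\What/\What_\Para$. Since $\Simple_\para\subset\simple$ does not contain $\alpha_0$, we have $\What_\gOhat=\W$ and $\What_\Para=\W_\para$ as subgroups of $S_n$. Acting on an affine permutation matrix, the left factor $\W$ permutes rows arbitrarily, while the right factor $\W_\para$ permutes columns within each tableau row $\row(j)$. Consequently $w$ and $\kappa$ lie in the same $(\W,\W_\para)$-double coset iff, for each $j$, the multisets of $t$-exponents of the columns indexed by $\row(j)$ agree for the two matrices.

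Reading off the data: by \Cref{wpp}, in $w$ the column $\fij i.1.$ for $1\leq i\leq s$ carries $t^{\nu_i-1}$ and every other column carries $t^{-1}$; by definition, in $\kappa$ the column $l(i)$ carries $t^{\nu_i-1}$ and the column $m(i)$ carries $t^{-1}$. Setting $a_j:=\max(0,\lambda_j-\max_{k<j}\lambda_k)$, the constructions of \Cref{redb4blue} give $|\set_1(j)|=|\red(j)|=a_j$, so the $t^{-1}$-multiplicity in $\row(j)$ matches automatically. What remains is the equality of the multisets $\{\nu_i:\fij i.1.\in\row(j)\}$ and $\{\nu_i:l(i)\in\row(j)\}$.

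The main combinatorial step --- and the main obstacle --- is to see that both multisets are indexed by the same interval in $\{1,\dots,s\}$. On the $w$-side, $\fij i.1.\in\row(j)$ iff row $j$ of the tableau is the topmost row of length $\geq i$, giving the interval $\{\max_{k<j}\lambda_k+1,\dots,\lambda_j\}$. On the $\kappa$-side, since $l$ enumerates $\red$ in increasing order and $\red(j)$ consists of the $a_j$ smallest entries of $\row(j)$, the indices $i$ with $l(i)\in\row(j)$ form the interval $\{a_1+\cdots+a_{j-1}+1,\dots,a_1+\cdots+a_j\}$. I would close the gap by proving by induction on $j$ the identity $\max_{k\leq j}\lambda_k=a_1+\cdots+a_j$: the step is immediate by cases according to whether $\lambda_j>\max_{k<j}\lambda_k$ (so $a_j=\lambda_j-\max_{k<j}\lambda_k$ absorbs the new maximum) or not (so $a_j=0$ and both sides are unchanged). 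This identifies the two intervals, hence the desired multiset equality; one can then pick $w_p\in\W_\para$ permuting within each $\row(j)$ so that $\kappa w_p$ has the same column data as $w$, and take $w_g\in\W$ to be the unique row permutation that maps $\kappa w_p$ to $w$.

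The ``in particular'' assertion is then formal. Since $w_g\in\W\subset\gOhat$ and $w_p\in\W_\para\subset\Para$, we have $\Borelhat w\Para=\Borelhat w_g\kappa w_p\Para\subset\gOhat\kappa\Para$; passing to closures in $\gLhat/\Para$ yields $X_\Para(w)\subset\overline{\gOhat\kappa\Para}\pmod{\Para}$.
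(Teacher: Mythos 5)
Your proof is correct, and the underlying combinatorics is the same as the paper's, but the framing is genuinely cleaner. The paper constructs $w_g$ and $w_p$ explicitly from the observation that $\fij i.1.$ and $l(i)$ lie in the same tableau row for each $i$ (and a corresponding matching between $\set_2$ and \blue), justified by the one-line remark that both sequences are increasing with the same block sizes. You instead characterize the full $\What_{\gOhat}\backslash\What/\What_\Para$ double-coset invariant — the multiset of $t$-exponents of columns in each block $\row(j)$ — and reduce the lemma to a clean combinatorial identity, which you prove by the induction $\max_{k\leq j}\lambda_k=a_1+\cdots+a_j$. This buys a transparent statement of exactly what must be checked and a complete proof of the row-matching fact, at the small cost of not exhibiting $w_g,w_p$ explicitly (which the paper's formulation does and which is occasionally convenient elsewhere). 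One cosmetic note: the paper's displayed affine permutation matrix for $\w$ omits the $t^{-1}$ factor on the $\set_2$ columns (a typo); your reading of $\overset{\sim}{\w}$, namely that every column in $\set_2$ carries $t^{-1}$, is the correct one.
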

\begin{proof}
Recall the disjoint subsets $\set_1, \set_2$ of $\left\{1,\ldots,n\right\}$ from \Cref{redb4blue}.
Consider the bijection $\iota:\set_2\rightarrow\left\{\fij i.j.\mid 1\leq j\leq \nu_i-1\right\}$ given by $\iota(\fij i.j.)=\fij i.j-1.$. 
We reformulate \Cref{wpp} as 
$$    \overset\sim\w=\sum\limits_{i=1}^st^{\nu_i-1}\E_{\fij i.\nu_i.,\fij i.1.}-\sum\limits_{i\in\set_2}\E_{\iota(i),i}$$
It follows from \Cref{apm} that the affine permutation matrix of $\w$ is given by
$$    \w=\sum\limits_{i=1}^st^{\nu_i-1}\E_{\fij i.\nu_i.,\fij i.1.}+\sum\limits_{i\in\set_2}\E_{\iota(i),i}$$
Observe that $\#\red(k)=\#\set_1(k)$ and $\#\blue(k)=\#\set_2(k)$.
Since both $(l(i))_{1\leq i\leq s}$ and $(\fij i.1.)_{1\leq i\leq s}$ are increasing sequences, $\fij i.1.$ and $l(i)$ are in the same row for each $i$.
Furthermore, there exists an enumeration $t(1),\ldots,t(n-s)$ of $\set_2$ such that $t(i)$ is in the same row as $m(i)$ for all $i$.
We now define the affine permutation matrix of $w_g\in W$ and $w_p\in W_\para$ as follows: \begin{align*}
    w_g&=\sum\limits_{i=1}^s\E_{i,\fij i.\nu_i.}+\sum\limits_{i=1}^{n-s}\E_{i+s,\iota(t(i))}\\
    w_p&=\sum\limits_{i=1}^s\E_{\fij i.1.,l(i)}+\sum\limits_{i=1}^{n-s}\E_{t(i),m(i)}
\end{align*}
A simple calculation shows $\w=w_g\kappa w_p$.
\end{proof}

\begin{prop}
\label{gstable}
The Schubert variety $X_\Para(\kappa)$ is stable under left multiplication by \gOhat, i.e., $X_\Para(\kappa)=\overline{\gOhat\kappa\Para}(\mod\Para)$.
\end{prop}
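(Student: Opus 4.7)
The plan is to reduce the assertion to a Bruhat-order inequality for each simple reflection in the finite Weyl group $W$, and then verify that inequality via a combinatorial case analysis keyed to the explicit form of $\kappa$.

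Since $X_\Para(\kappa) = \overline{\Borel\kappa\Para}/\Para$ is $\Borel$-stable by construction and $\gOhat$ is generated by $\Borel$ together with (lifts of) the simple reflections $s_i$ for $i \in \simple = \{1,\ldots,n-1\}$, it suffices to show that the minimal parabolic $\Borel \cup \Borel s_i \Borel$ stabilizes $X_\Para(\kappa)$ for every such $i$. This is equivalent to $s_i\kappa \leq \kappa$ in the Bruhat order on $\What/\What_\Para$, i.e., either (A) $s_i\kappa < \kappa$ in $\What$, or (B) $\kappa^{-1}(\alpha_i)$ is a root of $\What_\Para$, in which case $s_i\kappa = \kappa \cdot s_{\kappa^{-1}(\alpha_i)}$ lies in the coset $\kappa\What_\Para$.

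Writing $\kappa = \sigma\tau_q$ from the affine permutation matrix at the start of this section, I read off $\sigma^{-1}(i) = l(i)$ with $\ord(t_{l(i)}) = \nu_i - 1$ for $1 \leq i \leq s$, and $\sigma^{-1}(i) = m(i-s)$ with $\ord(t_{m(i-s)}) = -1$ for $s < i \leq n$. A direct computation of $\kappa^{-1}(\alpha_i)$ via \Cref{actionOfTau} (in the spirit of \Cref{count}) gives that (A) holds iff $\ord(t_{\sigma^{-1}(i+1)}) < \ord(t_{\sigma^{-1}(i)})$, or else equality holds together with $\sigma^{-1}(i+1) < \sigma^{-1}(i)$. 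The case analysis then proceeds as follows: if $i = s$, then $\ord(t_{l(s)}) = \nu_s - 1 \geq 0 > -1 = \ord(t_{m(1)})$, yielding (A); if $1 \leq i < s$ and $\nu_i > \nu_{i+1}$, the orders again differ, giving (A); if $s < i \leq n-1$ and $m(i-s), m(i-s+1)$ lie in different sets $\blue(k)$, then the bottom-to-top enumeration of $\blue$ forces $m(i-s+1) < m(i-s)$, whence (A); and if they lie in the same $\blue(k) \subseteq \row(k)$, then $(m(i-s), m(i-s+1))$ is a root of $\What_\Para$, giving (B). The remaining sub-case---$1 \leq i < s$ with $\nu_i = \nu_{i+1}$---is the main obstacle.

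For this case, the key combinatorial claim is that $\nu_i = \nu_{i+1}$ forces $l(i)$ and $l(i+1)$ into the same tableau row, thereby giving (B). My plan is to prove it by observing that $l(i)$ lies in row $k(i) := \min\{k : \lambda_k \geq i\}$---the row containing the top entry of column $i$---together with the identity $\nu_i - \nu_{i+1} = \#\{k : \lambda_k = i\}$. The hypothesis $\nu_i = \nu_{i+1}$ then says no row has exactly $i$ boxes, which forces $\lambda_{k(i)} > i$, i.e., $\lambda_{k(i)} \geq i+1$; hence $k(i)$ qualifies as a candidate for $k(i+1)$, and combined with the monotonicity $k(i+1) \geq k(i)$ we conclude $k(i) = k(i+1)$. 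This closes the last sub-case and establishes $\gOhat$-stability.
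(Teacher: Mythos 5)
Your proposal is correct and follows essentially the same route as the paper's proof: the same reduction to checking, for each $i\in\{1,\ldots,n-1\}$, that either $s_i\kappa<\kappa$ or $s_i\kappa\in\kappa\What_\Para$, and the same five-way case split on $i$ versus $s$ and on the tableau data $\nu_i,\nu_{i+1}$ and $m(i-s),m(i+1-s)$. The only real differences are that you supply a proof of the combinatorial fact (that $\nu_i=\nu_{i+1}$ forces $l(i)$ and $l(i+1)$ into the same row) which the paper states without argument, and that you compute $\kappa^{-1}(\alpha_i)$ directly from \Cref{actionOfTau} rather than routing through \Cref{table} --- an equivalent but somewhat cleaner bookkeeping.
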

\begin{proof}
Consider the affine permutation matrix of $\kappa$.
We will show, for $1\leq i<n$, either $s_i\kappa=\kappa(\mod\Para)$ or $s_i\kappa<\kappa$.
We split the proof into several cases, and use \Cref{table}:\begin{enumerate}
    \item $i<s$ and $\nu_i=\nu_{i+1}$ {\bf:} We deduce from $\nu_i=\nu_{i+1}$ that the entries $l(i)$ and $l(i+1)$ appear in the same row of the tableau. 
        In particular, $l(i)\in\Simple_\para$. 
        The non-zero entries of the $i^{th}$ and $(i+1)^{th}$ row are $t^{\nu_i-1}\E_{i,l(i)}$ and $t^{\nu_{i+1}-1}\E_{i+1,l(i+1)}$ respectively.
        We see $s_i\kappa=\kappa s_{l(i)}=\kappa(\mod\What_\Para)$. 
    \item $i<s$ and $\nu_i>\nu_{i+1}$ {\bf:} The non-zero entries of the $i^{th}$ and $(i+1)^{th}$ row are $t^{\nu_i-1}\E_{i,l(i)}$ and $t^{\nu_{i+1}-1}\E_{i+1,l(i+1)}$ respectively.
        Case $2$ of \Cref{table} applies with $a=l(i),\,b=l(i+1)$, and we have $s_i\kappa<\kappa$.
    \item $i=s$ {\bf:} The non-zero entries of the $i^{th}$ and $(i+1)^{th}$ row are $t^{\nu_s-1}\E_{s,l(s)}$ and $t^{-1}\E_{s+1,m(1)}$ respectively.
        Since $m(1)\in\blue(r)$, it follows from \Cref{redb4blue} that $l(s)<m(1)$.
        Case $2$ of \Cref{table} applied with $a=l(s),\,b=m(1)$ tells us $s_i\kappa<\kappa$.
    \item $i>s$ and $m(i-s)\in\Simple_\para$ {\bf:} The non-zero entries of the $i$ and $(i+1)^{th}$ row are $t^{-1}\E_{i,m(i-s)}$ and $t^{-1}\E_{i+1,m(i+1-s)}$ respectively.
        It follows $s_i\kappa=\kappa s_{m(i-s)}=\kappa(\mod\What_\Para)$.
    \item $i>s$ and $m(i-s)\notin\Simple_\para$ {\bf:} It follows from $m(i-s)\notin\Simple_\para$ that if $m(i-s)\in\row(j)$ then $m(i+1-s)\in\row(j-1)$.
        In particular, $m(i+1-s)<m(i-s)$.
        The non-zero entries of the $i$ and $(i+1)^{th}$ row are $t^{-1}\E_{i,m(i-s)}$ and $t^{-1}\E_{i+1,m(i+1-s)}$ respectively.
        Case $1$ of \Cref{table} applied with $a=m(i+1-s),\,b=m(i+s)$ tells us $s_i\kappa<\kappa$.
\end{enumerate}
\end{proof}

\begin{theorem}
\label{injective}
Let $\phi_\para$ be as in \Cref{defnPhi}.
Then $\image(\phi_\para)\subset X_\Para(\kappa)$.
In particular, the map $\phi_\para$ gives a compactification of $T^*\gl/\para$.
Further $\kappa$ is the minimal element in \What\ satisfying $\image(\phi_\para)\subset X_\Para(\kappa)$.
\end{theorem}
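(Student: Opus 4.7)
The plan is to establish the three claims---containment, compactification, and minimality---using the Richardson element $\Z\in\u$ together with the $\gOhat$-stable Schubert variety $X_\Para(\kappa)$.

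For the containment $\image(\phi_\para)\subset X_\Para(\kappa)$, I first check it at the test point $(1,\Z)$: by \Cref{kappa}, $1-t^{-1}\Z$ lies in the Bruhat cell $\Borel\w\Borel$, and by \Cref{samecoset}, $\w=w_g\kappa w_p$ with $w_g\in\What_{\gOhat}$ and $w_p\in\What_\Para$, so $\Borel\w\Borel\subset\gOhat\kappa\Para$. Thus $\phi_\para(1,\Z)(\mod\Para)\in\gOhat\kappa\Para/\Para\subset X_\Para(\kappa)$, the final inclusion via the $\gOhat$-stability of \Cref{gstable}. The $\gl$-equivariance of $\phi_\para$, combined with $\gl\subset\gOhat$, propagates this to every point in the $\gl$-orbit of $(1,\Z)$; density of that orbit (\Cref{conjugacy}) together with continuity of $\phi_\para$ and closedness of $X_\Para(\kappa)$ then forces $\image(\phi_\para)\subset X_\Para(\kappa)$. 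The compactification assertion follows at once: $X_\Para(\kappa)$ is projective and $\phi_\para$ is an injection (already shown), so the closure of $\image(\phi_\para)$ inside $X_\Para(\kappa)$ is a projective compactification of $T^*\gl/\para$.

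For minimality I plan to exhibit a point of $\image(\phi_\para)$ lying in the open Bruhat cell $\Borel\kappa\Para/\Para\subset X_\Para(\kappa)$; once produced, any $\kappa'\in\What$ with $\image(\phi_\para)\subset X_\Para(\kappa')$ must have $X_\Para(\kappa')$ containing that cell---hence its closure $X_\Para(\kappa)$---forcing $\kappa\leq\kappa'$. My candidate is $\phi_\para(w_g^{-1},\Z)=w_g^{-1}(1-t^{-1}\Z)(\mod\Para)$, which belongs to $w_g^{-1}\Borel\w\Borel(\mod\Para)$. The standard Bruhat multiplication rule gives $w_g^{-1}\Borel\w\Borel\subset\Borel(w_g^{-1}\w)\Borel=\Borel\kappa w_p\Borel$, and modding out by $\Para$ (using $w_p\in\What_\Para$) yields precisely $\Borel\kappa\Para/\Para$, provided that $l(w_g^{-1}\w)=l(w_g^{-1})+l(\w)$.

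The principal obstacle is this length identity, which---using $\kappa\in\What^\Para$ and $w_p\in\What_\Para$ so that $l(\kappa w_p)=l(\kappa)+l(w_p)$---is equivalent to $l(\kappa)+l(w_p)=l(w_g)+l(\w)$. I expect it to reduce to a tableau-theoretic identity: each of the four lengths can be written as a sum over ordered pairs of positions via the formula \Cref{lengthoftau} for affine translations and standard inversion counts for the finite permutations $w_g,w_p$, and the identity should follow by tracking how the sets $\red,\blue$ and the maps $l,m,\iota$ of \Cref{redb4blue,41} partition the positive affine roots contributing to each factor.
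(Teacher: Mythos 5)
Your argument for the containment $\image(\phi_\para)\subset X_\Para(\kappa)$ and for the compactification claim follows the paper's proof exactly: test at $(1,\Z)$, invoke \Cref{kappa}, \Cref{samecoset}, \Cref{gstable}, and then spread by $\gl$-equivariance, density (\Cref{conjugacy}), and closedness of the Schubert variety. That part is correct and matches the paper.

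On minimality you have the right strategy---exhibit a point of $\image(\phi_\para)$ in the open cell $\Borel\kappa\Para/\Para$---but there is a genuine gap where you yourself flag "the principal obstacle." You choose $g=w_g^{-1}$ with the explicit $w_g$ from \Cref{samecoset}, and you then need $l(w_g^{-1}\w)=l(w_g^{-1})+l(\w)$, equivalently $l(\kappa)+l(w_p)=l(w_g)+l(\w)$. You leave this unproven, and there is an additional subtlety you do not address: the factorization $\w=w_g\kappa w_p$ with $w_g\in W$, $w_p\in\What_\Para$ is not unique (one can transfer factors across $W\cap\What_\Para=W_\Para$), so there is no a priori reason the \emph{specific} $w_g$ constructed in \Cref{samecoset} should satisfy the length-additivity even if some valid factorization does. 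The paper sidesteps both issues. It does not pin down $g$ explicitly; instead it uses \Cref{gstable}, which says precisely that for every $1\le i<n$, either $s_i\kappa<\kappa$ or $s_i\kappa\in\kappa\What_\Para$, i.e.\ $\kappa$ is the unique maximal $\What^\Para$-representative of the left $W$-orbit $W\w\What_\Para/\What_\Para$. From this, a standard ascending-chain argument in Bruhat order on minimal coset representatives produces $w\in W$ with $\kappa=w\w'$ (where $\w'$ is the $\What^\Para$-representative of $\w$) and $l(\kappa)=l(w)+l(\w')$; hence $\Borel w\Borel\w\Para=\Borel\kappa\Para$, and any $g\in\Borel w\Borel\cap\gl$ works. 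That derivation requires no tableau-level length computation, only the already-established proposition about $\gOhat$-stability. If you want to stay with your $w_g^{-1}$ test point, you would need to actually prove the identity $l(\kappa)+l(w_p)=l(w_g)+l(\w)$ for the explicit matrices of \Cref{samecoset}, or alternatively show $w_g=w^{-1}$ for the $w$ produced by the maximality argument; as written, that remains an undischarged obligation.
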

\begin{proof}
\Cref{dense} states $T^*\gl/\para=\overline{\mathcal O}$, where $\mathcal O$ is the \gl-orbit of $(1,\Z)\in\gl\times^\para\u$.
Since $\phi_\para(1,Z)\in\Borel\w\Para/\Para$ (cf. \Cref{kappa}),
the \gl-equivariance of $\phi_\para$ implies $\phi_\para(\mathcal O)\subset\gl\Borel\w\Para/\Para=\gOhat\w\Para/\Para$. 
It follows $$\phi_\para\left(T^*\gl/\para\right) =\phi_\para\left(\overline{\mathcal O}\right)\subset\overline{\phi_\para\left(\mathcal O\right)}\subset\overline{\gOhat\w\Para/\Para}=X_\Para(\kappa)$$
where the last equality follows from \Cref{samecoset} and \Cref{gstable}.
Further, $\overline{\phi_\para(T^*\gl/\para)}$, being a closed subvariety of $X_\Para(\kappa)$, is compact.
Finally, to prove minimality, we can find a $g\in\gl$ such that $\phi_\para(g,\Z)\in\Borel\kappa\Para/\Para$.
Now, \Cref{gstable} implies that $\kappa$ is maximal in $\W\w$, and so there exists a $w\in\W$ such that $\kappa=w\w$ and $l(\kappa)=l(w)+l(\w)$.
In particular, $\overline{\Borel w\Borel\w\Para}=\overline{\Borel\kappa\Para}$.
Then $\phi_\para(g,\Z)\in X_\Para(\kappa)$ for any $g\in\Borel w\Borel$.
\end{proof}

\ifdraft\marginpar{main.tex}\fi
\ifdraft\marginpar{beta.tex}\fi
\section{Lusztig's Embedding}
In this section, we construct $\betac\in\coroots$ (depending on $\boldsymbol\nu$) for which the Schubert variety $X_{\gOhat}(\tau_\betac)$ is a compactification of $\Ni_{\boldsymbol\nu}$ via $\psi$.
We also relate this compactification to the one in \Cref{injective} via the Springer resolution.
\subsection{The co-root \betac}
\label{tbetac}
\label{betac}
Let $\betac\in\coroots$ be given by the diagonal matrix:\begin{align*}
    \betac  &=\sum\limits_{i=1}^s(1-\nu_i)\E_{i,i}+\sum\limits_{i=s+1}^n\E_{i,i}\\
            &=\mathrm {Id}_{n\times n}-\sum\limits_{i=1}^s\nu_i\E_{i,i}
\end{align*}
The affine permutation matrix of $\tau_\betac$ is given by \begin{align*}
    \tau_\betac=\sum\limits_{i=1}^st^{\nu_i-1}\E_{i,i}+\sum\limits_{i=s+1}^n t^{-1}\E_{i,i}
\end{align*} 

\begin{lemma}
\label{length}
The length of $\tau_\betac(\in\What)$ is $2\dim\gl/\para$.
\end{lemma}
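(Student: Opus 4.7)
The plan is to apply the length formula \eqref{lengthoftau}, namely $l(\tau_\betac) = \sum_{\alpha \in \proots}|\alpha(\betac)|$, and show by direct case analysis that the sum equals $n^2 - \sum_i \lambda_i^2 = 2\dim \gl/\para$ (using that $\dim\gl/\para = \sum_{i<j}\lambda_i\lambda_j$ from Section~\ref{41}). The crucial ingredient at the end will be the identity $\sum_{i=1}^s (2i-1)\nu_i = \sum_{i=1}^r \lambda_i^2$ from \Cref{magic}.

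First I would read off the orders of the diagonal entries from the affine permutation matrix of $\tau_\betac$: $\ord(t_i)=\nu_i-1$ for $1\le i\le s$ and $\ord(t_i)=-1$ for $s<i\le n$. Then for $\alpha=(a,b)$ with $1\le a<b\le n$, formula \eqref{actionOfq} gives $\alpha(\betac)=\ord(t_b)-\ord(t_a)$, which splits into three cases:
\begin{itemize}
\item If $a<b\le s$, then $|\alpha(\betac)|=|\nu_b-\nu_a|=\nu_a-\nu_b$, since $\boldsymbol\nu$ is non-increasing.
\item If $a\le s<b$, then $|\alpha(\betac)|=|-1-(\nu_a-1)|=\nu_a$.
\item If $s<a<b$, then $|\alpha(\betac)|=0$.
\end{itemize}

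Next I would collect the contributions. Counting, for each index $i\le s$, the number of times $\nu_i$ appears with each sign, the first case contributes $\sum_{i=1}^s \bigl((s-i)-(i-1)\bigr)\nu_i=\sum_{i=1}^s(s-2i+1)\nu_i$, while the second case contributes $(n-s)\sum_{i=1}^s\nu_i$. Adding these gives
\begin{align*}
l(\tau_\betac)=\sum_{i=1}^s\nu_i(n-2i+1).
\end{align*}

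Finally, using that $\boldsymbol\nu$ is a partition of $n$, so $\sum_{i=1}^s\nu_i=n$, I would rewrite this as $n^2-\sum_{i=1}^s(2i-1)\nu_i$ and invoke \Cref{magic} to identify the latter sum with $\sum_{i=1}^r\lambda_i^2$. This yields $l(\tau_\betac)=n^2-\sum\lambda_i^2=2\sum_{i<j}\lambda_i\lambda_j=2\dim\gl/\para$, as required. The main (modest) obstacle is keeping the bookkeeping straight in the case analysis; once that is done, the identity from \Cref{magic} closes the gap between the combinatorics of $\boldsymbol\nu$ and the dimension formula expressed in terms of $\boldsymbol\lambda$.
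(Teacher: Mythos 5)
Your proof is correct and follows essentially the same route as the paper: apply the length formula \eqref{lengthoftau}, split the sum over positive roots according to whether the indices lie in $\{1,\ldots,s\}$ or $\{s+1,\ldots,n\}$, reduce to $n^2-\sum_{i=1}^s(2i-1)\nu_i$, and close with \Cref{magic}. The only difference is cosmetic bookkeeping in how the intermediate sum is simplified.
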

\begin{proof}
Let $\langle\ ,\ \rangle$ denote the dual pairing on $\cartan^*\times\cartan$.
We use \Cref{lengthoftau} to compute $l(\tau_\betac)$: \begin{align*}
    l(\tau_\betac)  &=\sum\limits_{1\leq i<j\leq n}\left\vert\left\langle \epsilon_i-\epsilon_j,\betac\right\rangle\right\vert\\
                    &=\sum\limits_{1\leq i<j\leq n}\left\lvert\left\langle \epsilon_i-\epsilon_j,\mathrm{Id}_{n\times n}-\sum\limits\nu_k\E_{kk}\right\rangle\right\rvert\\
                    &=\sum\limits_{1\leq i<j\leq n}\left\lvert\left\langle \epsilon_i-\epsilon_j,\sum\limits\nu_k\E_{kk}\right\rangle\right\rvert\\
                    &=\sum\limits_{1\leq i<j\leq s}\left\lvert\left\langle \epsilon_i-\epsilon_j,\sum\limits\nu_k\E_{kk}\right\rangle\right\rvert+\sum\limits_{\substack{1\leq i\leq s\\ s<j\leq n}}\left\lvert\left\langle\epsilon_i-\epsilon_j,\sum\limits\nu_k\E_{kk}\right\rangle\right\rvert \\ 
                    &=\sum\limits_{1\leq i<j\leq s}\left(\nu_i-\nu_j\right)+\sum\limits_{\substack{1\leq i\leq s\\ s<j\leq n}}\nu_i 
\end{align*}\begin{align*}
                    &=\sum\limits_{k=1}^{s} (s+1-2k)\nu_k+(n-s)\sum\limits_{k=1}^s\nu_i\\
                    &=s\sum\limits_{k=1}^s\nu_k - \sum\limits_{k=1}^s(2k-1)\nu_k+(n-s)n\\
                    &=sn - \sum\limits_{k=1}^{s} (2k-1)\nu_k +(n-s)n\\
                    &=n^2-\sum\lambda_i^2\qquad\qquad\qquad(\text{cf. \Cref{magic}})\\
                    &=2\dim\gl/\para.
\end{align*}
\end{proof}

\begin{lemma}
\label{betainWg}
The element $\tau_\betac$ belongs to $\What^{\gOhat}$.
In particular, $\dim X_{\gOhat}(\tau_\betac)=l(\tau_\betac)$.
\end{lemma}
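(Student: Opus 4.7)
The plan is to show that $\tau_\betac$ is the minimal representative of the coset $\tau_\betac\What_{\gOhat}$ by verifying the appropriate length-increasing criterion on each simple reflection of $\What_{\gOhat}$; the dimension statement is then standard. Since $\gOhat$ corresponds to the simple roots $\simple \subset \Simple$, the subgroup $\What_{\gOhat}$ is the finite Weyl group $W$, generated by $s_1,\ldots,s_{n-1}$. Thus $\tau_\betac \in \What^{\gOhat}$ is equivalent to the inequality $\tau_\betac s_i > \tau_\betac$ for every $1 \leq i \leq n-1$.

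By \Cref{count}, applied to $q = \betac$ and $\alpha = \alpha_i \in \proots$, this in turn is equivalent to $\alpha_i(\betac) \leq 0$ for all $i$. I would compute these pairings directly from \Cref{actionOfq}, using the affine permutation matrix $\tau_\betac = \sum_{i=1}^{s} t^{\nu_i - 1}\E_{ii} + \sum_{i=s+1}^{n} t^{-1}\E_{ii}$. Writing $\tau_\betac = \sum t_i \E_{ii}$, we read off $\ord(t_i) = \nu_i - 1$ for $i \leq s$ and $\ord(t_i) = -1$ for $i > s$, and \Cref{actionOfq} gives $\alpha_i(\betac) = \ord(t_{i+1}) - \ord(t_i)$.

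A short case check handles the three regimes:
\begin{align*}
\alpha_i(\betac) &= \nu_{i+1} - \nu_i &&\text{if } 1 \leq i < s,\\
\alpha_s(\betac) &= -1 - (\nu_s - 1) = -\nu_s &&\text{if } i = s,\\
\alpha_i(\betac) &= 0 &&\text{if } s < i \leq n-1.
\end{align*}
Since $\boldsymbol\nu = (\nu_1,\ldots,\nu_s)$ is written in non-increasing order and all $\nu_i \geq 1$, each of these quantities is $\leq 0$. By \Cref{count}, this yields $\tau_\betac s_i > \tau_\betac$ for every simple reflection $s_i$ of $W = \What_{\gOhat}$, so $\tau_\betac$ is the minimal representative of $\tau_\betac \What_{\gOhat}$, i.e., $\tau_\betac \in \What^{\gOhat}$.

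For the dimension claim, it is standard (as recalled in the paragraph following the parabolic Bruhat decomposition) that for any $w \in \What^\Para$, the Schubert variety $X_\Para(w)$ has dimension $l(w)$; specializing to $\Para = \gOhat$ and $w = \tau_\betac$ gives $\dim X_{\gOhat}(\tau_\betac) = l(\tau_\betac)$. There is no real obstacle here: the argument is a direct application of \Cref{count} to the explicit coroot $\betac$, and the only point requiring care is the direction of the inequality (right multiplication by simple reflections of $\What_{\gOhat}$, corresponding to the condition $\alpha(\betac) \leq 0$ rather than $\geq 0$).
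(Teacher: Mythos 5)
Your proof is correct and takes essentially the same route as the paper, which gives only the one-line justification ``Follows from \Cref{actionOfTau} and \Cref{actionOfq}.'' You have simply spelled out the computation those references leave implicit, passing through \Cref{count} (itself an immediate consequence of \Cref{actionOfTau}), correctly identifying $\What_{\gOhat}=W$, and verifying $\alpha_i(\betac)\leq 0$ in the three regimes $i<s$, $i=s$, $i>s$.
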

\begin{proof}
Follows from \Cref{actionOfTau} and \Cref{actionOfq}.
\end{proof}

\ifdraft\marginpar{beta.tex}\fi
\ifdraft\marginpar{nilpotent.tex}\fi

\begin{theorem}
\label{springer2}
\label{lusztig}
There exists a resolution of singularities $\theta:T^*\gl/\para\rightarrow\Ni_{\boldsymbol\nu}$, given by $(g,N)\mapsto gNg^{-1}$ for $(g,N)\in\gl\times^\para\u$.
Furthermore, we have the following commutative diagram:\begin{center}
\vskip -3mm 
\includegraphics[scale=0.9]{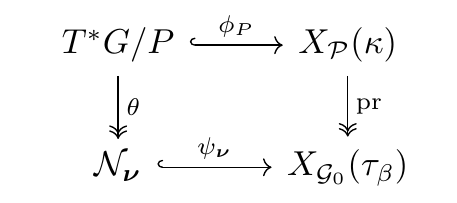}
\vspace{-2mm} 
\end{center}
where $\operatorname{pr}$ is the restriction to $X_\Para(\kappa)$ of the natural projection $\gLhat/\Para\twoheadrightarrow\gLhat/\gOhat$.
The map $\psi_\nu$ is an open immersion onto the \emph{opposite cell} $Y_{\gOhat}(\tau_q):=\Borel^-\cap X_{\gOhat}(\tau_q)$ of the Schubert variety $X_{\gOhat}(\tau_\betac)$;
\ifdraft\marginpar{The map $\psi_\nu$ is an open embedding.}\fi 
in particular, the Schubert variety $X_{\gOhat}(\tau_\betac)$ is a compactification of $\Ni_{\boldsymbol\nu}$.
\end{theorem}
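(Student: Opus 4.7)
The theorem bundles four statements: $\theta$ is a resolution of singularities onto $\Ni_{\boldsymbol\nu}$; the square commutes; $\psi_\nu$ is an open immersion onto $Y_{\gOhat}(\tau_\betac)$; and $X_{\gOhat}(\tau_\betac)$ compactifies $\Ni_{\boldsymbol\nu}$. I would address these in that order, with the bulk of the work going into the open-immersion claim.

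For the resolution $\theta$, well-definedness on the quotient is immediate from $(gp)(p^{-1}Np)(gp)^{-1}=gNg^{-1}$. The image equals $\overline{\gl\cdot\Z}$ because the $\para$-orbit of $\Z$ is dense in $\u$ (\Cref{dense}), and since $\Z$ has Jordan type $\boldsymbol\nu$ (\Cref{defn:z}), this closure is $\Ni_{\boldsymbol\nu}$. Properness follows by factoring $\theta$ through the closed subvariety $\{(g\para,X)\in\gl/\para\times\mathfrak{sl}_n:g^{-1}Xg\in\u\}$ and projecting to $\mathfrak{sl}_n$, using projectivity of $\gl/\para$. Birationality amounts to showing $\theta^{-1}(\Z)$ is a single point: if $g^{-1}\Z g\in\u$ then density of $\para\cdot\Z$ yields $p\in\para$ with $g^{-1}\Z g=p^{-1}\Z p$, so $gp^{-1}\in C_\gl(\Z)=C_\para(\Z)$ by \Cref{centralizerInP}, hence $g\para=\para$. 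Commutativity of the diagram is then the one-line check
\[
\psi_\nu(\theta(g,N))=(1-t^{-1}gNg^{-1})\,\gOhat=g(1-t^{-1}N)g^{-1}\,\gOhat=g(1-t^{-1}N)\,\gOhat=\operatorname{pr}(\phi_\para(g,N)),
\]
using $g^{-1}\in\gl\subset\gOhat$.

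For the open-immersion claim, I first identify $\operatorname{pr}(X_\Para(\kappa))$ with $X_{\gOhat}(\tau_\betac)$. Reading off the affine permutation matrix of $\kappa$ in \Cref{formula:kappa}, one sees $\kappa=\tau_\betac\,\sigma$ for the permutation $\sigma\in S_n=\W_{\gOhat}$ sending $i\mapsto l(i)$ for $i\le s$ and $i\mapsto m(i-s)$ for $i>s$. Since $\tau_\betac\in\What^{\gOhat}$ by \Cref{betainWg}, this gives $\operatorname{pr}(X_\Para(\kappa))=X_{\gOhat}(\tau_\betac)$; combined with the diagram and \Cref{injective}, we obtain $\psi_\nu(\Ni_{\boldsymbol\nu})\subset X_{\gOhat}(\tau_\betac)$. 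For any $N\in\Ni$, the matrix $1-t^{-1}N$ is polynomial in $t^{-1}$ with identity constant term, hence lies in $\Borel^-=\pi_-^{-1}(\borel^-)$; so the image is confined to $Y_{\gOhat}(\tau_\betac)$. Injectivity of $\psi_\nu$ is inherited from the injectivity of $\psi$ (\Cref{defn:psi}), and dimensions match: $\dim\Ni_{\boldsymbol\nu}=2\dim\gl/\para=l(\tau_\betac)=\dim Y_{\gOhat}(\tau_\betac)$ (\Cref{length}). Normality of $\Ni_{\boldsymbol\nu}$ (Kraft--Procesi in type $A$) together with Zariski's main theorem then upgrade the dense injection to an open immersion. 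The compactification statement follows immediately, since $X_{\gOhat}(\tau_\betac)$ is projective and $Y_{\gOhat}(\tau_\betac)$ is dense open in it.

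The main obstacle is this final upgrade: an injective morphism with dense image into an irreducible variety need not be an open immersion. The cleanest route is via normality of $\Ni_{\boldsymbol\nu}$ combined with Zariski's main theorem, but one could alternatively produce an explicit affine parametrization of $Y_{\gOhat}(\tau_\betac)$ by matrices of the form $1-t^{-1}N$, which would yield the open immersion directly.
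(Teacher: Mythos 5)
Your overall skeleton is the same as the paper's: factor $\kappa=\tau_\betac\sigma$ with $\sigma\in W=\W_{\gOhat}$ (the paper's \Cref{form:sigma}), deduce $X_{\gOhat}(\tau_\betac)=X_{\gOhat}(\kappa)$, push $\image(\phi_\para)\subset X_\Para(\kappa)$ from \Cref{injective} through the commuting square, and close with the dimension match $\dim\Ni_{\boldsymbol\nu}=l(\tau_\betac)$. Where you genuinely depart from the paper is in the treatment of $\theta$: the paper simply cites Hesselink and Fu, whereas you give a self-contained argument (properness via the incidence variety, birationality via $C_\gl(\Z)=C_\para(\Z)$ from \Cref{centralizerInP}). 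That is a real improvement in self-containedness, and the idea is sound; but the step ``density of $\para\cdot\Z$ yields $p\in\para$ with $g^{-1}\Z g=p^{-1}\Z p$'' is not a consequence of density alone. What you need is that \emph{every} $N\in\u$ of Jordan type $\boldsymbol\nu$ lies in $\para\cdot\Z$, and the honest justification is a dimension count: $\dim C_\para(N)\le\dim C_\gl(N)=\dim\gl-2\dim\gl/\para$ forces $\dim\para\cdot N\ge\dim\u$, so $\para\cdot N$ is open dense in the irreducible $\u$ and hence coincides with $\para\cdot\Z$. Spell that out.

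The second, more serious issue is the final upgrade. You invoke ``normality of $\Ni_{\boldsymbol\nu}$ (Kraft--Procesi) together with Zariski's main theorem,'' but normality of the \emph{source} is the wrong side for this: the normalization of a cuspidal curve is a finite birational morphism from a normal variety that is not an open immersion. ZMT (or the standard ``bijective birational onto normal is an isomorphism'' lemma) requires the \emph{target} $Y_{\gOhat}(\tau_\betac)$ to be normal. That is in fact true -- Schubert varieties in Kac--Moody partial flag varieties are normal, and an open subset of a normal variety is normal -- so your argument can be repaired by swapping which normality you cite. The cleaner and more elementary route, which you gesture at in your last sentence, is to note that $\psi$ is already a \emph{closed} immersion of $\Ni$ into the opposite big cell $U^-\gOhat/\gOhat$ (the coordinate ring of the opposite cell surjects onto $\mathbb C[\Ni]$ because one can read $N$ off the $t^{-1}$-coefficient), hence $\psi_\nu$ is a closed immersion of $\Ni_{\boldsymbol\nu}$ into the affine variety $Y_{\gOhat}(\tau_\betac)$; equality of dimensions with an irreducible target then forces surjectivity, giving an isomorphism onto $Y_{\gOhat}(\tau_\betac)$, which is open in $X_{\gOhat}(\tau_\betac)$. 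This avoids both normality facts entirely. For what it is worth, the paper's own wording (``the result follows'' after the dimension count) is equally terse at this point, so you have identified the genuine soft spot in the argument; you just reached for the wrong normality.
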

\begin{proof}
\label{sigma}
The existence and desingularization property of $\theta$ are implied by the main results of \cite{he} and \cite{fu}.
Recall the injective map $\psi$ from \Cref{springer}; let $\psi_{\boldsymbol\nu}=\psi|\Ni_{\boldsymbol\nu}$.
Note that $\kappa=\tau_\betac\sigma$, where the affine permutation matrix of $\sigma\in W$ is given by \begin{align}
\label{form:sigma}
    \sigma=\sum\limits_{i=1}^{s}\E_{i,l(i)}+\sum\limits_{i=1}^{n-s}\E_{i+s,m(i)}
\end{align}
In particular, $X_{\gOhat}(\tau_\betac)=X_{\gOhat}(\kappa)$, and so it follows from \Cref{injective} that $\psi(\Ni_{\boldsymbol\nu})\subset X_{\gOhat}(\tau_\betac)$.
Since $\dim\Ni_{\boldsymbol\nu}=2\dim\gl/\para=l(\tau_\betac)=\dim X_{\gOhat}(\tau_\betac)$, the result follows.
\end{proof}

\ifdraft\marginpar{nilpotent.tex}\fi
\ifdraft\marginpar{codim.tex}\fi
\section{Dimension of $X_\Para(\kappa)$}
Lakshmibai (cf. \cite{vl}) has constructed, for \para\ a maximal parabolic (proper) subgroup in \gl, a similar embedding $\eta_\para:T^*\gl/\para\hookrightarrow X_\Para(\eta)$. 
The map $\eta_\para$ is dominant, thus giving a Schubert variety as a compactification of $T^*\gl/\para$.
Lakshmibai et al. (cf. \cite{crv}) discuss a family of maps $T^*\gl/\borel\rightarrow \gLhat/\Borel$, including the map $\phi_\borel$, for which the image of $T^*\gl/\borel$ is \emph{not} a Schubert variety.
With this in mind, we compute the dimension of $X_\Para(\kappa)$, and thus the codimension of $\image(\phi_\para)$ in $X_\Para(\kappa)$. 
We also recover Lakshmibai's result, i.e., for \para\ a maximal parabolic (proper) subgroup in \gl, the map $\phi_\para$ is dominant.
In fact, \Cref{maximalresult} says that $X_\Para(\kappa)$ is a compactification of $T^*G/P$ if and only if $P$ is a maximal parabolic subgroup.

\begin{prop}
The dimension of the Schubert variety $X_\Para(\kappa)$ equals the length of $\kappa$.
\end{prop}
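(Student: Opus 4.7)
The plan is to show that $\kappa$ is itself the minimal length representative of its right $\What_\Para$-coset, i.e., $\kappa\in\What^\Para$. Once this is established, the assertion follows immediately from the general fact recorded in \S3.6: $\dim X_\Para(w) = l(w)$ for $w\in\What^\Para$.

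By the criterion $w s_\alpha > w \Leftrightarrow w(\alpha) > 0$ (Corollary after \Cref{count}), the membership $\kappa\in\What^\Para$ is equivalent to $\kappa s_j > \kappa$ for every $\alpha_j\in\Simple_\para$; recall $\alpha_j\in\Simple_\para$ precisely when $j$ and $j+1$ lie in the same row of the tableau from \Cref{41}. To check each such inequality I would apply \Cref{table} with $w=\kappa$ and $s_r = s_{(j,j+1)}$. Reading off the affine permutation matrix of $\kappa$ displayed in \S\ref{formula:kappa}, the permutation $\sigma$ and diagonal $\tau_q$ satisfy $\sigma(l(i))=i$ and $\ord(t_{l(i)})=\nu_i-1$ for $1\le i\le s$, while $\sigma(m(i))=i+s$ and $\ord(t_{m(i)})=-1$ for $1\le i\le n-s$.

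A three-case analysis on whether $j, j+1$ lie in $\red$ or $\blue$ then settles each simple root. If both $j,j+1\in\red$, write $j=l(i)$, $j+1=l(i+1)$; then $\sigma(j)=i<i+1=\sigma(j+1)$ and $\ord(t_j)-\ord(t_{j+1})=\nu_i-\nu_{i+1}\ge 0$ because $\boldsymbol\nu$ is a partition, so Case 1 or Case 2 of \Cref{table} (depending on whether equality holds) yields $\kappa s_j>\kappa$. If $j\in\red, j+1\in\blue$, then $\ord(t_j)=\nu_i-1\ge 0>-1=\ord(t_{j+1})$, and Case 2 of \Cref{table} applies. If both $j,j+1\in\blue$, the enumeration of $\blue$ row-by-row from bottom to top, left to right within each row, gives $j=m(i), j+1=m(i+1)$ for some $i$, whence $\sigma(j)<\sigma(j+1)$ and $\ord(t_j)=\ord(t_{j+1})=-1$, so Case 1 of \Cref{table} applies.

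There is no genuine obstacle here; \Cref{table} does the length comparisons once its hypotheses are unpacked. The two structural facts doing the real work are (i) that $\boldsymbol\nu$ is non-increasing (essential only in the $\red$-$\red$ case) and (ii) that the $m$-enumeration runs left to right within each row so consecutive integers in $\blue\cap\row(k)$ receive consecutive $m$-indices (essential only in the $\blue$-$\blue$ case). Everything else is bookkeeping already encoded in the tableau.
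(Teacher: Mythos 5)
Your proposal is correct and is essentially identical in strategy to the paper's proof: establish $\kappa\in\What^\Para$ by checking $\kappa s_j>\kappa$ for each $\alpha_j\in\Simple_\para$, using the case analysis of \Cref{table} on the affine permutation matrix of $\kappa$, split according to whether $j,j+1$ are in $\red$ or $\blue$. The one small thing worth making explicit (as the paper does) is why the case $j\in\blue$, $j+1\in\red$ never arises: since $\alpha_j\in\Simple_\para$ forces $j,j+1$ into the same row, and within a row every element of $\red$ is smaller than every element of $\blue$ (\Cref{redb4blue}), $j\in\blue$ already implies $j+1\in\blue$.
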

\begin{proof}
We need to show that $\kappa\in\What^\Para$. 
For $\alpha_i\in\Simple_\para$, we show $\kappa s_i>\kappa$.
Recall the partitioning of $\left\{1,\ldots,n\right\}$ from \Cref{redb4blue} into \red\ and \blue.
Note that $\alpha_i\in\Simple_\para$, implies $i$ and $i+1$ appear in the same row of the tableau.
In particular, if $i\in\Simple_\para\bigcap\blue(j)$ then $i+1\in\blue(j)$.
\begin{enumerate}
    \item   Suppose $i\in\blue$. 
            Then $i=m(k)$ and $i+1=m(k+1)$ for some $k$. 
            The non-zero entries in the $i^{th}$ and $(i+1)^{th}$ columns are $t^{-1}\E_{k+s,i}$ and $t^{-1}\E_{k+s+1,i+1}$.
            We apply Case 1 of \Cref{table} with $a=i,\,b=i+1$.
    \item   Suppose $i\in\red$ and $i+1\in\blue$.
            The non-zero entries in the $i^{th}$ and $(i+1)^{th}$ columns are $t^{\nu_k-1}\E_{k,i}$ and $t^{-1}\E_{j,i+1}$.
            Since $k\leq s<j$, we can apply Case 2 of \Cref{table} with $a=i,\,b=i+1$, $\nu_k-1=\ord(t_a)>\ord(t_b)=-1$ and $i=\sigma(a)<\sigma(b)=i+1$ to get $\kappa<\kappa s_i$.
    \item   Suppose $i,i+1\in\red$.
            Since $i$ and $i+1$ are in the same row of the tableau, we have $i=l(k)$ and $i+1=l(k+1)$ for some $k$. 
            The non-zero entries in the $i^{th}$ and $(i+1)^{th}$ columns are $t^{\nu_k-1}\E_{k,i}$ and $t^{\nu_{k+1}-1}\E_{k+1,i+1}$.
            If $\nu_k=\nu_{k+1}$, Case 1 of \Cref{table} applies with $a=i,\,b=i+1$ to give $\kappa<\kappa s_i$.
            If $\nu_k>\nu_{k+1}$, Case 2 of \Cref{table} applies with $a=i,\,b=i+1$, $\nu_k-1=\ord(t_a)>\ord(t_b)=\nu_{k+1}-1$ to give $\kappa<\kappa s_i$.
\end{enumerate}
\end{proof}
\begin{lemma}
\label{lsigma}
\label{lengthofkappa}
The length of $\kappa$ is given by the formula \begin{align*}
    l(\kappa)   &=2\dim\gl/\para+\sum\limits_{k'<k}\#\row(k)\#\blue(k')
\end{align*}
\end{lemma}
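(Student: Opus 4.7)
The plan is to use the factorization $\kappa=\tau_\betac\sigma$ already recorded in the proof of \Cref{springer2} (with $\sigma\in W$ a suitable finite permutation), together with the fact, established in \Cref{betainWg}, that $\tau_\betac\in\What^{\gOhat}$; that is, $\tau_\betac$ is the minimal representative of its coset modulo $\What_{\gOhat}=W$. By the standard length-additive parabolic decomposition $\What=\What^{\gOhat}\cdot W$, this yields $l(\kappa)=l(\tau_\betac)+l(\sigma)$; combined with $l(\tau_\betac)=2\dim G/P$ from \Cref{length}, it therefore suffices to prove $l(\sigma)=\sum_{k'<k}\#\row(k)\#\blue(k')$.

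The finite permutation $\sigma$ is order-preserving on \red, which it maps bijectively onto $\{1,\ldots,s\}$, and on \blue\ it is the inverse of the enumeration $m$ shifted by $s$. Since $\sigma(\red)<\sigma(\blue)$, no inversion $(i,j)$ of $\sigma$ with $i<j$ has $i\in\red$. The remaining inversions split into two types. Type (i): $i\in\blue(k')$ and $j\in\red(k)$ with $k'<k$ (so $i<j$, since row values are monotone in row index, and $\sigma(i)>s\geq\sigma(j)$ automatically); these contribute $\sum_{k'<k}\#\blue(k')\#\red(k)$. Type (ii): $i\in\blue(k')$ and $j\in\blue(k)$ with $k'<k$ (again $i<j$; and the bottom-to-top enumeration $m$ of \blue\ lists the lower row $\blue(k)$ before the upper row $\blue(k')$, so $\sigma(i)>\sigma(j)$); these contribute $\sum_{k'<k}\#\blue(k')\#\blue(k)$. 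Pairs within a single row of \blue\ are not inversions, since $m$ reads each row left to right.

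Summing the two contributions, $l(\sigma)=\sum_{k'<k}\#\blue(k')(\#\red(k)+\#\blue(k))=\sum_{k'<k}\#\blue(k')\#\row(k)$, which gives the stated formula after adding $l(\tau_\betac)=2\dim G/P$. The main obstacle is just the bookkeeping around the enumeration $m$; once the two families of inversions are isolated, the combinatorial count is routine, and the nontrivial length additivity $l(\tau_\betac\sigma)=l(\tau_\betac)+l(\sigma)$ is supplied for free by \Cref{betainWg}.
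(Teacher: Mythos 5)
Your proof is correct and follows essentially the same route as the paper: it uses the factorization $\kappa=\tau_\betac\sigma$, the length additivity $l(\kappa)=l(\tau_\betac)+l(\sigma)$ coming from $\tau_\betac\in\What^{\gOhat}$ together with $l(\tau_\betac)=2\dim\gl/\para$, and then counts inversions of $\sigma$ via the $\red/\blue$ partition. The only difference is cosmetic: you count inversions directly on $\sigma$ (organized by the source indices' membership in $\red(k')$ or $\blue(k')$), whereas the paper phrases the same count in terms of $\sigma^{-1}$ and the values $l(i)$, $m(j)$; the two bookkeeping schemes yield identical sums.
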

\begin{proof}
We have $\kappa=\tau_\betac\sigma$ where $\sigma$ is as in \Cref{form:sigma}.
Viewing $\sigma$ as an element of $S_n$, we have $\sigma^{-1}(i)=\begin{cases}l(i)&i\leq s\\m(i-s)&i>s\end{cases}$. 
We compute \begin{align*}
    l(\sigma)=  &\#\left\{(i,j)\mid 1\leq i<j\leq n,\,\sigma^{-1}(i)>\sigma^{-1}(j)\right\} \\
             =  &\#\left\{(i,j)\mid i<j\leq s, l(i)>l(j)\right\}\\ 
                    &\qquad+\#\left\{(i,j)\mid i\leq s<j,\,l(i)>m(j-s)\right\}\\ 
                    &\qquad\qquad+\#\left\{(i,j)\mid s<i<j,\,m(i-s)>m(j-s)\right\} 
\end{align*}
Recall that $l(i)$ is an increasing sequence, i.e., $i<j<s\implies l(i)<l(j)$, and so \begin{align*}
    l(\sigma)=  &\#\left\{(i,j)\mid i\leq s,\,j\leq n-s,\,l(i)>m(j)\right\}     \\ 
                    &\qquad+\#\left\{(i,j)\mid i<j\leq n-s,\,m(i)>m(j)\right\}  \\
             =  &\#\left\{(i,j)\mid i\in\red,\,j\in\blue,\,i>j\right\}     \\ 
                    &\qquad+\#\left\{(i,j)\mid i<j\leq n-s,\,m(i)>m(j)\right\}  \\
             =  &\sum\limits_{k'<k}\#\left\{(i,j)\mid i\in\red(k),\,j\in\blue(k')\right\} \\ 
                    &\qquad+\sum\limits_{k'<k}\#\left\{(i,j)\mid i\in\blue(k),\,j\in\blue(k')\right\} \\
             =  &\sum\limits_{k'<k}\#\left\{(i,j)\mid i\in\row(k),\,j\in\blue(k')\right\} \\ 
             =  &\sum\limits_{k'<k}\#\row(k)\#\blue(k')    
\end{align*}
Finally, \Cref{betainWg} implies $l(\kappa)=l(\tau_\betac)+l(\sigma)$ and we get, as claimed:$$l(\kappa)=2\dim\gl/\para+\sum\limits_{k'<k}\#\row(k)\#\blue(k')$$
\end{proof}

\begin{cor}
\label{maximalresult}
The Schubert variety $X_\Para(\kappa)$ is a compactification of $T^*G/P$ if and only if $P$ is a maximal parabolic subgroup.
\end{cor}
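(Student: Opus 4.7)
The plan is to compare dimensions. By \Cref{injective}, $\phi_\para$ is an injective morphism from the irreducible variety $T^*\gl/\para$ into the irreducible projective variety $X_\Para(\kappa)$; its image is therefore irreducible of dimension $2\dim\gl/\para$, and $\overline{\image(\phi_\para)}$ equals $X_\Para(\kappa)$ precisely when $l(\kappa) = 2\dim\gl/\para$. So the task reduces to characterizing this equality of dimensions.

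By \Cref{lengthofkappa}, $l(\kappa) - 2\dim\gl/\para = \sum\limits_{k' < k}\#\row(k)\,\#\blue(k')$, and since $\#\row(k) = \lambda_k \geq 1$ for every $k$, this correction sum vanishes if and only if $\#\blue(k') = 0$ for all $1 \leq k' < r$. The next step is to compute $\#\blue(k')$ directly from the definitions in \Cref{redb4blue}: one checks that $\#\red(1) = \lambda_1$, so $\blue(1) = \emptyset$; while for $k' \geq 2$, setting $M_{k'} := \max\{\lambda_j \mid j < k'\}$, one has $\#\red(k') = \max\bigl(0,\,\lambda_{k'} - M_{k'}\bigr)$, and hence $\#\blue(k') = \min(\lambda_{k'}, M_{k'}) \geq 1$.

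Combining these observations, the correction sum vanishes if and only if no index $k'$ with $2 \leq k' < r$ exists, i.e., if and only if $r \leq 2$. The final step is the straightforward translation: $r - 1$ counts the simple roots of $\gl$ omitted in defining $\para$, so $r = 2$ corresponds exactly to $\para$ being a maximal (proper) parabolic subgroup, while $r = 1$ is the degenerate case $\para = \gl$ in which both $T^*\gl/\para$ and $X_\Para(\kappa)$ are singletons and the statement is trivially true. The only non-routine step is the elementary case analysis giving $\#\blue(k') = \min(\lambda_{k'}, M_{k'})$; everything else is direct assembly of \Cref{injective} and \Cref{lengthofkappa}.
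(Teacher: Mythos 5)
Your proof is correct and takes essentially the same route as the paper: reduce to comparing $l(\kappa)$ against $2\dim\gl/\para$ via \Cref{lengthofkappa}, then show the correction sum $\sum_{k'<k}\#\row(k)\#\blue(k')$ vanishes exactly when the tableau has at most two rows. The one place you go slightly beyond the paper is the explicit formula $\#\blue(k') = \min(\lambda_{k'}, M_{k'})$ for $k'\geq 2$; the paper contents itself with observing that $\blue(2)$ and $\row(r)$ are non-empty when $r\geq 3$, which is the same fact packaged less explicitly.
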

\begin{proof}
The parabolic subgroup $P$ is maximal if and only if $S_\para=\simple\backslash\left\{\alpha_d\right\}$ for some $d$, equivalently, the corresponding tableau has exactly $2$ rows. 
In this case $\blue\subset\row(2)$, (recall from \Cref{redb4blue} that $\blue=\bigsqcup\limits_i\blue(i)$).
It follows that the second term in \Cref{lengthofkappa} is an empty sum, which implies $l(\kappa)=2\dim\gl/\para$.
Suppose now that the tableau has $r\geq 3$ rows.
In this case, both $\blue(2)$ and $\row(r)$ are non-empty, and so the second term in \Cref{lengthofkappa} is strictly greater than $0$.
\end{proof}
\ifdraft\marginpar{codim.tex}\fi
\begingroup
\small

\endgroup
\end{document}